\theoremstyle{definition}
\newtheorem{condition}{Condition}
\newtheorem{example}{Example}
\newtheorem{theorem}{Theorem}
\newtheorem{lemma}{Lemma}
\newtheorem{corollary}{Corollary}
\newtheorem{remark}{Remark}
\crefname{condition}{Condition}{Condition}
\DeclareMathOperator*{\argmin}{arg\,min}
\title{Sequential Convex Restriction and its Applications in Robust Optimization}
\author{Dongchan Lee\thanks{Department of Mechanical Engineering, Massachusetts institute of Technology, Cambridge, MA 02139 ({dclee@mit.edu, jjs@mit.edu}).}
\and Konstantin Turitsyn\thanks{ D. E. Shaw Group, New York, NY 10036 ({turitsyn@mit.com}).}
\and Jean-Jacques Slotine\footnotemark[1]}
\date{}
\begin{document}

\maketitle

\begin{abstract}
This paper presents a convex sufficient condition for solving a system of nonlinear equations under parametric changes and proposes a sequential convex optimization method for solving robust optimization problems with nonlinear equality constraints.
By bounding the nonlinearity with concave envelopes and using Brouwer's fixed point theorem, the sufficient condition is expressed in terms of closed-form convex inequality constraints.
We extend the result to provide a convex sufficient condition for feasibility under bounded uncertainty.
Using these conditions, a non-convex optimization problem can be solved as a sequence of convex optimization problems, with feasibility and robustness guarantees.
We present a detailed analysis of the performance and complexity of the proposed condition.
The examples in polynomial optimization and nonlinear network are provided to illustrate the proposed method.
\end{abstract}

\section{Introduction}
Nonlinear equations are ubiquitous in modeling optimization problems, but they impose unique challenges in ensuring solvability and feasibility of the solution.
This paper presents a method for guaranteeing solvability and feasibility under uncertainty and shows how it can be applied to solve a robust optimization problem subject to nonlinear equality constraints:
\begin{equation}
	\begin{aligned}
		\underset{u,x}{\text{minimize}} \hskip 1em & f_0(u) \\
		\text{subject to} \hskip 1em & f(x,u,w)= 0, \\
		& h(x,u,w)\leq 0, \hskip 1em \forall w\in\mathcal{W},
	\end{aligned}
	\label{eqn:robustOPT_orig}
\end{equation}
where $f:(\mathbf{R}^n,\mathbf{R}^m,\mathbf{R}^r)\rightarrow \mathbf{R}^n$ and $h:(\mathbf{R}^n,\mathbf{R}^m,\mathbf{R}^r)\rightarrow \mathbf{R}^s$ are vectors of continuous nonlinear functions. The decision variables are divided into $x\in\mathbf{R}^n$, referred to as implicit (decision) variables, and $u\in\mathbf{R}^m$, referred to as explicit (decision) variables. Explicit variables are a subset of decision variables that are independent of the uncertain variables, and implicit variables are a subset of decision variables that adapt to the uncertain variables according to the equality constraints. Note that the number of equality constraints and the number of implicit variables are the same, so the implicit variables can be solved by the system of equations if explicit variables are appropriately chosen.
Uncertain variables are denoted by $w\in\mathbf{R}^r$ and are restricted to the uncertainty set, $\mathcal{W}$. 
The objective function is $f_0:\mathbf{R}^m\rightarrow \mathbf{R}$ and is a convex function of $u$ without loss of generality. If the objective does not meet this condition, it can be rewritten in this form by adding $h_{s+1}(x,u,w)=f_0(x,u)-u_{m+1}$ and setting $u_{m+1}$ as the objective function.
This is a semi-infinite optimization problem, where the constraints need to be satisfied for all realizations of the uncertainty set. This paper shows classes of sparse nonlinear constraints and uncertainty sets where this problem can be solved efficiently by a sequence of convex optimization problems.

Before discussing the constraints under uncertainty, we first consider the problem without uncertainty where the uncertainty set is a singleton, $\mathcal{W}=\{w^{(0)}\}$. We will refer to $w^{(0)}$ as the nominal uncertain variable, and the constraints in this case will be referred to as the nominal constraint. When the nominal constraint is considered, the argument $w$ will be dropped and we will simply write
\begin{subequations}\begin{align}
f(x,u)&=0 \label{eqn:eq}\\
h(x,u)&\leq 0, \label{eqn:ineq}
\end{align}\label{eqn:the_eqn}\end{subequations}
as the constraint for the problem. The feasible domain of explicit variables satisfying the nominal constraints is denoted by
\begin{displaymath}
\mathcal{U}=\{u\mid \exists\, x,\ f(x,u)=0,\ h(x,u)\leq 0  \}.
\end{displaymath}
This notion implies that the solution manifold satisfying $f(x,u)=0$ is projected onto the space of explicit variables. As an example, consider a quadratic equation, $f(x,u)=x^2+u_1x+u_2$. The projection of the manifold leads to the well-known solvability condition, $\mathcal{U}=\{u\mid u_1^2-4u_2\geq0\}$. The illustration of both manifold and its projection is shown in \cref{fig:quad_prj}. Notice that  (a) finding a general solvability condition for a large system of nonlinear equations is generally difficult if possible, and (b) the solvability condition forms a non-convex set.

\begin{figure}[!htbp]
	\centering
	\includegraphics[width=2.2in]{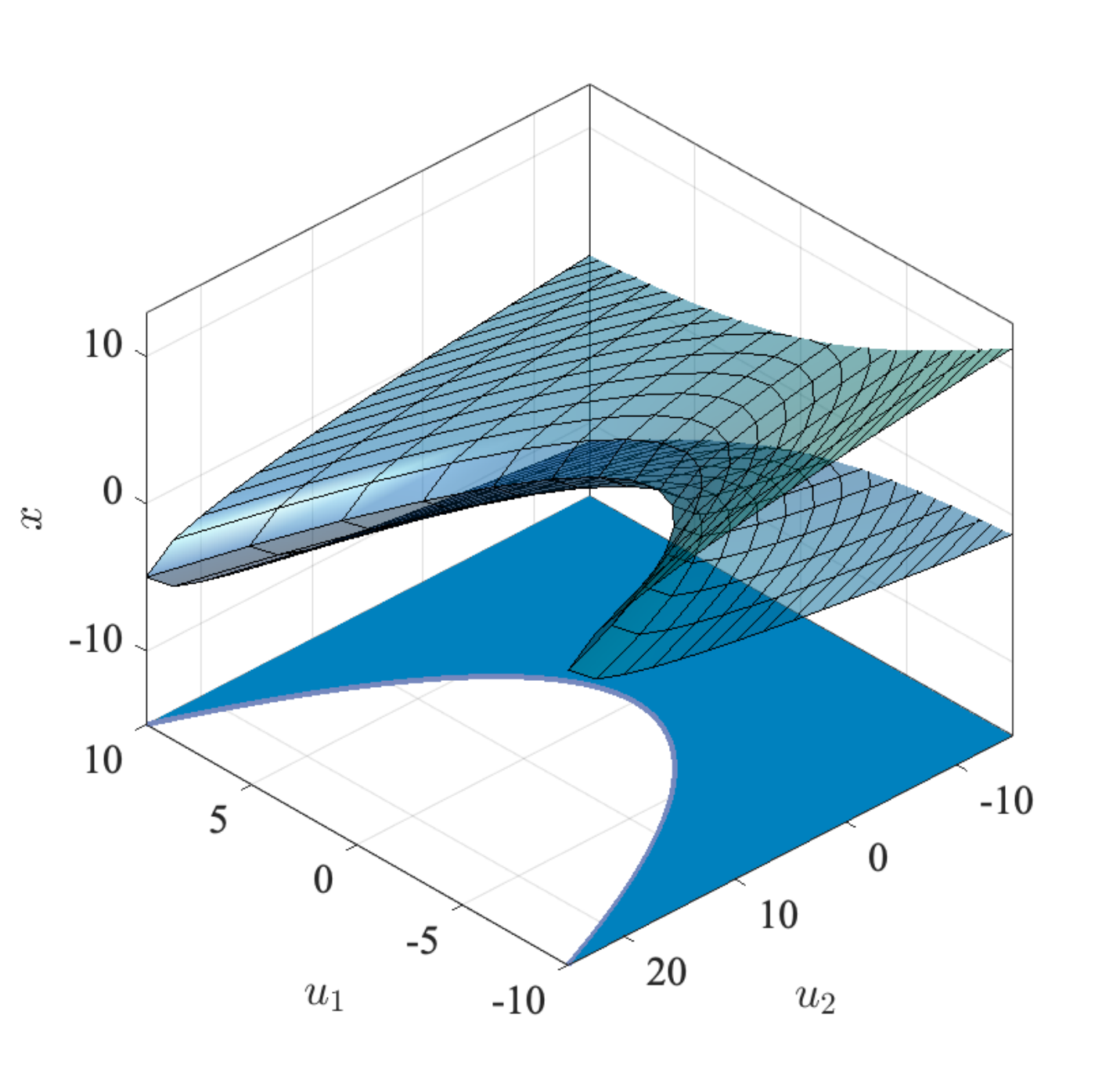}
	\caption{Projection of the manifold created by $x^2+u_1x+u_2=0$ onto the explicit variable space.}
	\label{fig:quad_prj}
\end{figure}

This paper considers the \textit{convex restriction} of $\mathcal{U}$, which we denote by $\mathcal{U}^\textrm{cvxrs}\subseteq\mathcal{U}$.
The convex restriction provides a convex sufficient condition for the feasibility of the explicit variable $u$, and can be written with a closed-form expression based on the envelope over the nonlinear functions.
We show extended analysis of the convex restriction and demonstrate its application to solve the robust optimization problem with feasibility guarantee and the bound on the optimality gap. It may not be obvious at first how the convex restriction is useful for solving the robust optimization problem, but it turns out to be a simple extension.

The paper has the following structure. 
Section \ref{sec:prelim} will show the representation of the constraints and preliminaries.
In Section \ref{sec:cvxrs}, we discuss convex restriction as originally proposed in \cite{Lee2018}, and provide extended analysis and properties. Namely, we will show (a) the explicit number of constraints involved in convex restriction, (b) the retrieval of implicit variables, (c) the non-emptiness of convex restriction around its neighborhood, (d) the equivalence when the constraints are convex, and (e) the complexity and performance trade-off based on the representation. 
In Section \ref{sec:rscrs}, we will extend the convex restriction to include bounded uncertain variables and derive a sufficient condition for robust feasibility of constrain¥ts in \cref{eqn:robustOPT_orig}.
Section \ref{sec:scrs} will study the sequential convex restriction, which iteratively solves convex optimization problems by replacing the original constraints with convex restriction conditions. We will show that the algorithm for the nominal constraint converges to a point satisfying the Karush-Kuhn-Tucker (KKT) condition. Section \ref{sec:conclusion} provides some concluding remarks.

\section{Preliminaries}\label{sec:prelim}

\subsection{Decomposed Representation}
The constraints in \cref{eqn:eq} and \cref{eqn:ineq} can be represented as linear combinations of continuous basis functions,
\begin{equation} \begin{aligned}
f(x,u)&=M\psi(z,u) \\
h(x,u)&=L\psi(z,u) \\
z&=Cx,
\end{aligned}\label{eqn:decomposed}\end{equation}
where $\psi:(\mathbf{R}^q,\mathbf{R}^m)\rightarrow \mathbf{R}^p$ is a vector of nonlinear basis functions, and $M\in\mathbf{R}^{n\times p},\, L\in\mathbf{R}^{s\times p}$ and $C\in\mathbf{R}^{q\times n}$ are constant matrices. The variable $z\in\mathbf{R}^q$ is a linearly transformed implicit variable and is assumed to satisfy the following condition.

\begin{condition}
$\mathbf{rank}(C)=n$. Equivalently, $\mathcal{P}=\{x\mid z=Cx,\ z^\ell\leq z\leq z^u\}$ is closed for some $z^u,\,z^\ell\in\mathbf{R}^{q}$.
\label{cond:closed}
\end{condition}

The representation in Equation \cref{eqn:decomposed} satisfying \cref{cond:closed} always exists where a trivial example is setting $M$ and $C$ to be the identity matrix, and $\psi(z,u)=f(x,u)$ with $z=x$. The set of basis functions is not unique, and there is a natural trade-off between the complexity and conservatism based on the choice of the basis functions (see \cref{ex:consv_vs_cplx}). In addition, the transformed implicit variable $z$ needs to be chosen such that $\psi_i$ is a function of only a finite subset of $\{z_1,...,z_q\}$. To make this statement more precise, let $\mathcal{I}_k$ denote the set of indices of $z$ that the basis function $\psi_k$ depends on. That is, given $e_j\in\mathbf{R}^q$ is a unit vector with $j$th element equal to 1 and zero otherwise,
\begin{displaymath}
\mathcal{I}_k=\left\{j \ \middle\vert\ \exists\, (z,\,u,\,\varepsilon\neq0),\ \ \psi_k(z,u)\neq\psi_k(z+\varepsilon e_j,u)\right\}.
\end{displaymath}
The degree of sparsity of the representation is defined as the worst-case cardinality of $\mathcal{I}_k$ and is denoted by $\lvert\mathcal{I}\rvert$ where
\begin{displaymath}
\lvert\mathcal{I}\rvert=\max_{k\in\{1,...,p\}}{\lvert \mathcal{I}_k\rvert}.
\end{displaymath}
It will be shown later that the number of constraints involved in the convex restriction grows exponentially with respect to $\lvert\mathcal{I}\rvert$, but there often exists a natural choice of $z$ such that $\lvert\mathcal{I}\rvert$ does not grow with respect to the size of the problem. The following example in a network flow problem shows how these variables can be chosen.

\begin{example}{(Nonlinear Network Flow Problem)}\label{ex:oscillators}
Consider a directed graph $G=(\mathcal{N},\mathcal{A})$ with $\theta_i$ and $b_i$ representing the internal state and external supply at each node $i\in\mathcal{N}$, and $E$ denoting the incidence matrix of the graph. 
Suppose the flow model between node $i$ and $j$ is given by a nonlinear function $\sigma:\mathbf{R}\rightarrow\mathbf{R}$. The conservation of the flows at every node imposes the constraint,
\begin{displaymath}
b_i+\sum_{j\in I(i)}\sigma(\theta_j-\theta_i)=\sum_{j\in O(i)}\sigma(\theta_i-\theta_j), \hskip2em \forall i\in\mathcal{N},
\end{displaymath}
where $I(i)$ is the set of start nodes of the edges that are incoming to, and $O(i)$ is the set of end nodes of the edges that are outgoing from, node $i$. 
Suppose that the supply $b_i$ at node $i=\{2,\ldots,\lvert\mathcal{N}\rvert\}$ are controlled while $b_1$ balances the overall supply and demand. Then the explicit variables are $u=\begin{bmatrix}b_2 & \ldots & b_{\lvert\mathcal{N}\rvert}\end{bmatrix}^T$, and the implicit variables are $x=\begin{bmatrix}\theta^T & b_1\end{bmatrix}^T$.
Let the transformed variable be $z=\begin{bmatrix}E^T\theta & b_1 \end{bmatrix}$ by choosing $C=\mathbf{blkdiag}(E^T,1)$.
The equality constraint can be represented by $M=\begin{bmatrix}-E & I\end{bmatrix}$ with the basis function  $\psi(z,u)=\begin{bmatrix} \sigma (z_1) & ... & \sigma (z_{\lvert\mathcal{A}\rvert}) & z_{\lvert\mathcal{A}\rvert+1} & u^T\end{bmatrix}^T$. 
Since $\psi_i$ has only one variable as the argument for all $i$, the degree of sparsity is $\lvert \mathcal{I}\rvert=1$ independent of the size of the network.
\end{example}

An important feature to notice is that the nonlinearity of $\psi_k$ can be arbitrarily bounded by constraining only $\lvert\mathcal{I}_k\rvert$ variables.
\begin{lemma}
For all $u\in\mathcal{U}$ and $\varepsilon>0$, there exists some $\delta$ such that if $\mathcal{P}_k=\{x\mid z=Cx,\ z_i^\ell\leq z_i\leq z_i^u,\,\forall\,i\in\mathcal{I}_k\}$ with $z^u_i-z^\ell_i<\delta$ for all $i\in\mathcal{I}_k$, then 
\begin{displaymath}\begin{aligned}
\max_{x\in\mathcal{P}_k}{\psi_k(Cx,u)}-\min_{x\in\mathcal{P}_k}{\psi_k(Cx,u)}&<\varepsilon.
\end{aligned}\end{displaymath}
\label{lemma:cont}
\end{lemma}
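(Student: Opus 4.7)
The plan is to exploit the structural fact that $\psi_k$ depends only on the $|\mathcal{I}_k|$ coordinates of $z$ indexed by $\mathcal{I}_k$, and to invoke the Heine--Cantor uniform continuity theorem on a bounded box.

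First I would observe that by the very definition of $\mathcal{I}_k$, the basis function factors as $\psi_k(z,u)=\tilde{\psi}_k\bigl((z_i)_{i\in\mathcal{I}_k},u\bigr)$ for some continuous $\tilde{\psi}_k:\mathbf{R}^{|\mathcal{I}_k|}\times\mathbf{R}^m\to\mathbf{R}$. Consequently, for any $x\in\mathcal{P}_k$ the value $\psi_k(Cx,u)$ depends only on the components $(Cx)_i$ with $i\in\mathcal{I}_k$, which are constrained to lie in the compact box $B=\prod_{i\in\mathcal{I}_k}[z_i^\ell,z_i^u]$. In particular, the supremum and infimum appearing in the statement are attained as the image of a compact set under a continuous map, so writing them as $\max$ and $\min$ is justified even though $\mathcal{P}_k$ itself may fail to be bounded in the unconstrained coordinates of $z$.

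Next, with $u\in\mathcal{U}$ fixed, I would embed $B$ into a fixed compact set $K\subset\mathbf{R}^{|\mathcal{I}_k|}$ containing all boxes of interest---for instance, the global bounding box for $z$ guaranteed by \cref{cond:closed}---and apply Heine--Cantor to the continuous map $y\mapsto\tilde{\psi}_k(y,u)$ restricted to $K$. This produces a $\delta>0$ such that $\|y-y'\|_\infty<\delta$ with $y,y'\in K$ implies $|\tilde{\psi}_k(y,u)-\tilde{\psi}_k(y',u)|<\varepsilon/2$. For any box $\mathcal{P}_k$ whose widths $z_i^u-z_i^\ell$ are all less than $\delta$, any two points $x_1,x_2\in\mathcal{P}_k$ have projections $(Cx_1)_{\mathcal{I}_k},(Cx_2)_{\mathcal{I}_k}$ lying in $B\subset K$ and differing by less than $\delta$ in $\ell_\infty$, so $|\psi_k(Cx_1,u)-\psi_k(Cx_2,u)|<\varepsilon/2$; taking supremum and infimum yields the desired inequality.

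The main subtlety I anticipate is that uniform continuity fails on all of $\mathbf{R}^{|\mathcal{I}_k|}$, so the argument has to be localized to a fixed compact domain rather than applied globally; this is precisely what \cref{cond:closed} supplies. Once that localization is in place, the remainder is a standard continuity argument whose only non-trivial ingredient is the sparsity of the dependence of $\psi_k$ on $z$, which allows the oscillation to be controlled by restricting only the $|\mathcal{I}_k|$ coordinates in $\mathcal{I}_k$ rather than all of $z$.
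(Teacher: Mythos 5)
Your proof is correct and follows essentially the same route as the paper's: continuity of $\psi_k$ controls its oscillation over a sufficiently small box, and the sparsity of its dependence on $z$ reduces the problem to the $\lvert\mathcal{I}_k\rvert$ constrained coordinates. You are in fact somewhat more careful than the paper's own proof, which tacitly assumes a single $\delta$ works irrespective of where the box is located; your explicit appeal to Heine--Cantor on a fixed compact set supplied by \cref{cond:closed} is the right way to make that step rigorous.
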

\begin{proof}
Suppose $\mathcal{P}=\{x\mid z=Cx,\ z^\ell\leq z\leq z^u\}$ with  $z^u-z^\ell<\delta$. 
Since the basis functions are continuous, for all $u\in\mathcal{U}$ and $\varepsilon>0$, there exists $\delta$ such that if $z^u-z^\ell<\delta$ then 
\begin{displaymath}
\max_{x\in\mathcal{P}}{\psi_k(Cx,u)}-\min_{x\in\mathcal{P}}{\psi_k(Cx,u)}<\varepsilon.
\end{displaymath}
Since the $\psi_k(x,u)$ is independent of $z_j$ with $j\in\{1,...,q\}\setminus\mathcal{I}_k$, 
\begin{displaymath}
\max_{x\in\mathcal{P}_k}{\psi_k(Cx,u)}-\min_{x\in\mathcal{P}_k}{\psi_k(Cx,u)}=\max_{x\in\mathcal{P}}{\psi_k(Cx,u)}-\min_{x\in\mathcal{P}}{\psi_k(Cx,u)}<\varepsilon.
\end{displaymath}
\end{proof}

The effect of nonlinearity can be controlled by bounding a finite number of variables as we saw in \cref{ex:oscillators}.
This property of the sparse representation will drastically reduce the complexity involved in convex restriction.

\subsection{Brouwer's Fixed Point Theorem}
Brouwer's fixed point theorem has been widely used in game theory, economics and in dynamical systems \cite{brouwer1911abbildung,border1989fixed,florenzano2003general}.
In this paper, Brouwer's fixed point theorem will be used in the proof to certify the existence of the implicit variable that satisfies the given constraints.
\begin{theorem} (Brouwer's Fixed Point Theorem) 
Let $\mathcal{P}\subseteq\mathbf{R}^n$ be a nonempty compact convex set and $G:\mathcal{P}\rightarrow\mathcal{P}$ be a continuous mapping. Then there exists some $x\in\mathcal{P}$ such that $G(x)=x$.
\label{thm:Brouwer}
\end{theorem}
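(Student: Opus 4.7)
The plan is to prove Brouwer's fixed point theorem via Sperner's lemma, which provides a combinatorial route to the topological conclusion. First I would reduce the problem from a general nonempty compact convex set $\mathcal{P} \subseteq \mathbf{R}^n$ to the standard simplex $\Delta^d = \{y \in \mathbf{R}^{d+1} \mid y_i \geq 0,\ \sum_{i=0}^d y_i = 1\}$, where $d$ is the dimension of the affine hull of $\mathcal{P}$. Any such $\mathcal{P}$ is homeomorphic to $\Delta^d$ (or to a point in the trivial case), and a continuous self-map $G$ on $\mathcal{P}$ conjugates to a continuous self-map $\tilde G$ on $\Delta^d$; a fixed point of $\tilde G$ pulls back to a fixed point of $G$. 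So it suffices to prove the theorem for $G : \Delta^d \to \Delta^d$.

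Next I would set up the Sperner labeling. For each vertex $v$ of a triangulation $T$ of $\Delta^d$, write $v$ and $G(v)$ in barycentric coordinates, and assign a label $\ell(v)$ chosen from the set $\{i \mid v_i > 0 \text{ and } G_i(v) \leq v_i\}$. This set is nonempty because $\sum_i G_i(v) = \sum_i v_i = 1$ forces at least one coordinate with $v_i > 0$ to satisfy $G_i(v) \leq v_i$. The labeling is a valid Sperner labeling since a vertex lying on the face spanned by $\{e_{i_1}, \ldots, e_{i_k}\}$ has $v_j = 0$ for $j \notin \{i_1, \ldots, i_k\}$, so its label automatically lies in $\{i_1, \ldots, i_k\}$.

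Sperner's lemma then guarantees at least one fully labeled sub-simplex of $T$, i.e.\ one whose $d+1$ vertices carry all labels $0, 1, \ldots, d$. Taking a sequence of triangulations whose mesh tends to zero produces a sequence of fully labeled sub-simplices; by compactness of $\Delta^d$ I extract a subsequence along which all $d+1$ vertices of each sub-simplex converge to a common limit $y^\star \in \Delta^d$. Continuity of $G$ together with the labeling condition yields $G_i(y^\star) \leq y^\star_i$ for every $i \in \{0, \ldots, d\}$, and since $\sum_i G_i(y^\star) = \sum_i y^\star_i = 1$ any strict inequality is impossible, giving $G(y^\star) = y^\star$. Transporting this back through the homeomorphism produces the required fixed point in $\mathcal{P}$.

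The main obstacle is Sperner's lemma itself, which is the real combinatorial substance of the argument. I would prove it by induction on dimension via a parity (door-counting) argument: count, modulo two, the sub-simplices carrying all labels $0, \ldots, d-1$ on one side and the fully labeled ones, using the inductive hypothesis on the boundary face carrying labels $\{0, \ldots, d-1\}$. An alternative strategy would be to assume no fixed point exists, construct the retraction $r : B^d \to S^{d-1}$ by intersecting the ray from $G(x)$ through $x$ with the boundary, and appeal to the no-retraction theorem; but rigorously establishing the non-existence of such a retraction still requires either algebraic-topological machinery or Sperner-type combinatorics, so the direct Sperner route seems cleanest.
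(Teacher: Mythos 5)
The paper does not prove this statement: Brouwer's fixed point theorem is imported as a classical result (with citations to the literature) and is used only as a tool in the derivation of the convex restriction, so there is no in-paper proof to compare yours against. Your proposal is a correct and entirely standard self-contained proof via Sperner's lemma. The reduction of a general nonempty compact convex set to the standard simplex of the dimension of its affine hull is sound, your labeling rule is well defined (the nonemptiness of $\{i \mid v_i > 0,\ G_i(v) \leq v_i\}$ follows exactly as you say from $\sum_i G_i(v) = \sum_i v_i = 1$ and nonnegativity), the Sperner boundary condition is verified, and the limiting argument over triangulations with vanishing mesh correctly upgrades the coordinatewise inequalities $G_i(y^\star) \leq y^\star_i$ to equality. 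The only substantive debt is Sperner's lemma itself, for which the parity induction you outline is the standard complete argument; if you were writing this out in full, that induction (and the existence of triangulations of arbitrarily small mesh, e.g.\ via iterated barycentric subdivision) is where the remaining detail lives. Your closing remark is also apt: the retraction route is not genuinely shorter, since the no-retraction theorem requires comparable machinery.
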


The convex restriction will be derived by designing the fixed-point equation from Newton's iteration and the self-mapping set $\mathcal{P}$ to be a polytope that is parametrized by its affine term. Using the sparse representation of the constraints, we will show that the number of constraints in convex restriction is linearly proportional to the number of constraints of the original problem.

\section{Convex Restriction}\label{sec:cvxrs}
The convex restriction provides an analytical expression for a convex sufficient condition for feasibility in \cref{eqn:the_eqn} around some nominal point, $(x^{(0)},\,u^{(0)})$. While convex relaxation has a globally optimal outer-approximation, which is the convex hull of the feasible set, the convex restriction can have many local regions where it cannot form a larger region due to the restriction as a convex set. We use the nominal point as the reference point around which the convex restriction is constructed. The nominal point is assumed to satisfy the following conditions.
\begin{condition} The nominal point, $(x^{(0)},\,u^{(0)})$, satisfies
\begin{enumerate}[(i)]
\item $f(x^{(0)},\,u^{(0)})=0$ and $h(x^{(0)},\,u^{(0)})\leq0$, and
\item If $f(x,u)$ is differentiable with respect to $x$, $\nabla_x f(x,u)\mid_{(x^{(0)},u^{(0)})}$ is invertable.
\end{enumerate}
\label{cond:nominal_point}
\end{condition}

\cref{cond:nominal_point} is not strictly necessary in constructing convex restriction, but it will be used later in the analysis of the algorithm proposed based on the convex restriction.
From the Implicit Function Theorem, it is known that there is a neighborhood of solutions where $x$ can be expressed as a function of $u$ if \cref{cond:nominal_point} is satisfied. The convex restriction will provide the bounds on the implicit variable and an explicit description of a convex neighborhood where the existence of the implicit variable is guaranteed.

\subsection{Fixed Point Representation}\label{subsec:Fixed_pt_eqn}
Here we present the fixed point representation of the equality constraint $f(x,u)=0$. The equality constraint can be rewritten in the following fixed point form,
\begin{equation}
    x=-(M\Lambda C)^{-1}Mg(z,u),
    \label{eqn:fxdpt_form}
\end{equation}
where 
\begin{displaymath}
    g(z,u)=\psi(z,u)-\Lambda z,
\end{displaymath}
with some matrix $\Lambda\in\mathbf{R}^{p\times q}$. The conservatism of the convex restriction depends on the choice of $\Lambda$. Finding the optimal $\Lambda$ that maximizes the region for convex restriction is difficult, but the Jacobian evaluated at the base point gives a good approximate solution.

\begin{itemize}
\item If $f$ is differentiable at the nominal point and $\nabla_xf(x_0,u_0)$ is non-singular, choose $\Lambda$ as the Jacobian of the basis function with respect to $z$ evaluated at the base point,
\begin{displaymath}
\Lambda=\nabla_z \psi(z,u^{(0)})\mid_{z=z^{(0)}}.
\end{displaymath}
Note that in this case $M\Lambda C=\nabla_x f(x,u^{(0)})\mid_{x=x^{(0)}}$.

\item If $f$ is non-differentiable at the nominal point, choose each element of $\Lambda$ as 
\begin{displaymath}
\Lambda_{ij}=\partial_{z_j} \psi_i(z,u^{(0)})\mid_{z=z^{(0)}},
\end{displaymath}
where $\partial_{z_j} \psi_i(z,u)$ is the subgradient of $\psi_i$ if $\psi_i$ is locally convex with respect to $z_j$ at the nominal point. If $\psi_i$ is locally concave, then $\partial_{z_j} \psi_i(z,u)$ is the supergradient.
\end{itemize}
For differentiable functions, the fixed point form in \cref{eqn:fxdpt_form} is equivalent to a single step of Newton's method, $x=-J_f^{-1}\left(f(x,u)-J_fx\right)$, where $J_f=\nabla_x f(x,u)$.

Given the explicit variable $u$, Equation \cref{eqn:fxdpt_form} defines a continuous nonlinear operator $G:\mathbf{R}^n\rightarrow\mathbf{R}^n$ that maps the implicit variable $x$ to $-(M\Lambda C)^{-1}Mg(Cx,u)$. By iterating this operator, a sequence of approximate solutions can be generated with the initial condition $x=x^{(0)}$ for an arbitrary value of $u$. We will verify the existence of the implicit variable by studying this sequence of approximate solutions and inferring the existence of a fixed point of the sequence. 

\subsection{Self-mapping Polytope}
Consider the following set of polytopes as a candidate for the self-mapping set in Brouwer's fixed point theorem,
\begin{displaymath}
\mathcal{P}(b)=\{x\mid Ax\leq b\} \\
\end{displaymath}
where
\begin{equation}\begin{aligned}
A=\begin{bmatrix} C \\ -C \end{bmatrix}, \ b=\begin{bmatrix} z^u \\ -z^\ell \end{bmatrix}.
\end{aligned}\label{eqn:def_Ab}\end{equation}
An alternative representation of this polytope is
\begin{displaymath}
\mathcal{P}(b)=\{x\mid z=Cx,\ z^\ell\leq z\leq z^u\}.
\end{displaymath}
Since $\mathcal{P}(b)$ is a polytope satisfying \cref{cond:closed}, the set is compact and convex. The set of polytopes parameterized by $b\in\mathbf{R}^{2q}$ will be used to guarantee the existence of an implicit solution using Brouwer's fixed point theorem.

\begin{lemma}
For a given explicit variable $u$, there exists an implicit variable $x$ that satifies $f(x,u)=0$ if and only if there exists $b\in\mathbf{R}^{2q}$ such that
\begin{equation}
\max_{x\in\mathcal{P}(b)} K_ig(Cx,u)\leq b_i,\hskip2em i=1,\ldots,2q,
\label{eqn:iff_solvable}
\end{equation}
where $K_i\in\mathbf{R}^{1\times p}$ is the $i$th row of matrix $K$ and
\begin{displaymath}
K=-A(M\Lambda C)^{-1}M.
\end{displaymath}
\label{lemma:iff_solvable}
\end{lemma}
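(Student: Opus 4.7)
The plan is to interpret the fixed-point reformulation in \cref{eqn:fxdpt_form} as defining a continuous self-map $G:\mathbf{R}^n\to\mathbf{R}^n$ by $G(x)=-(M\Lambda C)^{-1}Mg(Cx,u)$, and then reduce the existence of an implicit solution of $f(x,u)=0$ to the existence of a fixed point of $G$ on the polytope $\mathcal{P}(b)$ via Brouwer's theorem (\cref{thm:Brouwer}). The key bridge is that, using the decomposition $f(x,u)=M\psi(Cx,u)$ and $g(z,u)=\psi(z,u)-\Lambda z$, the identity $G(x)=x$ is algebraically equivalent to $M\psi(Cx,u)=f(x,u)=0$, so fixed points of $G$ are exactly the implicit solutions.

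For the ``if'' direction, I would first rewrite the inclusion $G(x)\in\mathcal{P}(b)$ as $AG(x)\leq b$; substituting the definitions of $G$ and $K=-A(M\Lambda C)^{-1}M$, this is exactly $Kg(Cx,u)\leq b$, i.e. the row-wise inequalities $K_ig(Cx,u)\leq b_i$. Requiring this to hold for every $x\in\mathcal{P}(b)$ is in turn equivalent to $\max_{x\in\mathcal{P}(b)}K_ig(Cx,u)\leq b_i$ for each $i$. Hence the hypothesis of the lemma says exactly that $G$ maps $\mathcal{P}(b)$ into itself. Since $\mathcal{P}(b)$ is closed and convex, and bounded because $\mathbf{rank}(C)=n$ (\cref{cond:closed}), and $G$ is continuous, Brouwer's theorem yields $x^\star\in\mathcal{P}(b)$ with $G(x^\star)=x^\star$, whence $f(x^\star,u)=0$.

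For the ``only if'' direction, suppose $x^\star$ satisfies $f(x^\star,u)=0$. The decomposition gives $M\psi(Cx^\star,u)=0$, which rearranges to $x^\star=-(M\Lambda C)^{-1}Mg(Cx^\star,u)=G(x^\star)$, so $x^\star$ is already a fixed point of $G$. I would then exhibit a valid $b$ by taking $b=Ax^\star$. Because $A=\begin{bmatrix}C\\-C\end{bmatrix}$ and $\mathbf{rank}(C)=n$, the inequality $Ax\leq Ax^\star$ forces $Cx=Cx^\star$ and hence $x=x^\star$, so $\mathcal{P}(b)=\{x^\star\}$ is a nonempty singleton and the maximum in \cref{eqn:iff_solvable} equals $K_ig(Cx^\star,u)$. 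A direct computation, using the fixed-point identity $G(x^\star)=x^\star$, gives
\begin{displaymath}
Kg(Cx^\star,u)=-A(M\Lambda C)^{-1}Mg(Cx^\star,u)=Ax^\star=b,
\end{displaymath}
so \cref{eqn:iff_solvable} holds with equality componentwise.

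The main obstacle is a soft one: to invoke Brouwer in the ``if'' direction I need $\mathcal{P}(b)$ to be nonempty, whereas the stated inequality is vacuous if $\mathcal{P}(b)=\emptyset$. I would address this either by reading the lemma as implicitly quantifying over $b$ with $\mathcal{P}(b)\neq\emptyset$, or equivalently by observing that the ``only if'' construction supplies a nonempty witness automatically. Beyond that caveat the proof is a straightforward unpacking of definitions together with one application of \cref{thm:Brouwer}; the only step worth stating carefully is the equivalence between the self-mapping property $G(\mathcal{P}(b))\subseteq\mathcal{P}(b)$ and the inequality \cref{eqn:iff_solvable}, since this is the conceptual heart of the convex restriction construction.
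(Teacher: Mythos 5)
Your proposal is correct and follows essentially the same route as the paper: read \cref{eqn:iff_solvable} as the self-mapping condition $G(\mathcal{P}(b))\subseteq\mathcal{P}(b)$, invoke Brouwer's theorem for sufficiency, and for necessity choose $b=Ax^\star$ so that $\mathcal{P}(b)$ collapses to the singleton $\{x^\star\}$ and the condition holds with equality. Your explicit verification that fixed points of $G$ coincide with solutions of $f(x,u)=0$, and your remark that $\mathcal{P}(b)$ must be nonempty for Brouwer to apply, are worthwhile refinements that the paper's proof leaves implicit.
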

\begin{proof} The condition in \cref{eqn:iff_solvable} implies that $-(M\Lambda C)^{-1}Mg(Cx,u)\in\mathcal{P}(b)$ for all $x\in\mathcal{P}(b)$. Then the set $\mathcal{P}(b)$ is self-mapping with the nonlinear map $G(x)=-(M\Lambda C)^{-1}Mg(Cx,u)$, so there exists an implicit variable $x\in\mathcal{P}(b)$ from Brouwer's fixed point theorem. To prove that this is a necessary condition, suppose there exists $(x^{(0)},u^{(0)})$ satisfying \cref{eqn:eq}. Choose $b=[(Cx^{(0)})^T \, (-Cx^{(0)})^T]^T$, then for $i=1,\ldots,2q$, $\max_{x\in\mathcal{P}(b)} K_ig(x,u)=A_ix^{(0)}=b_i$, which satisfies the condition \cref{eqn:iff_solvable}.
\end{proof}

\subsection{Concave Envelopes}
Suppose that the function $g_k$ is known to be bounded by some analytical functions $g_k^u$ and  $g_k^\ell$ such that
\begin{equation*}
g_k^\ell(z,u)\leq g_k(z,u)\leq g_k^u(z,u),
\end{equation*}
where the envelopes satisfy the following conditions.
\begin{condition} $g_k^u$ and $g_k^\ell$ are over- and under-estimators of $g_k$ such that
\begin{enumerate}[(i)]
\item $g_k^u$ is convex and $g_k^\ell$ is concave function of $z$ and $u$,
\item $g_k^u$ and $g_k^\ell$ are tight at the nominal point, 
\begin{displaymath}g_k^\ell(z^{(0)},u^{(0)})=g_k(z^{(0)},u^{(0)})=g_k^u(z^{(0)},u^{(0)}),\end{displaymath}
\item if $g$ is differentiable at the nominal point, the derivatives of estimators are tight,
\begin{displaymath}\begin{aligned}
\nabla_z g_k^\ell(z,u^{(0)})\big\lvert_{z=z^{(0)}}&=\nabla_z g_k^u(z,u^{(0)})\big\lvert_{z=z^{(0)}},\\
\nabla_u g_k^\ell(z^{(0)},u)\big\lvert_{u=u^{(0)}}&=\nabla_u g_k^u(z^{(0)},u)\big\lvert_{u=u^{(0)}}.
\end{aligned}\end{displaymath}
\end{enumerate}
\label{cond:env}
\end{condition}
Similarly $\psi_k^u$ and $\psi_k^\ell$ are the over- and under-estimators of $\psi_k$ that satisfy \cref{cond:env}. We define the envelope such that the nonlinear function is bounded by a convex over-estimator and a concave under-estimator as a \textit{concave envelope}. For any continuous function, there exists a concave envelope satisfying \cref{cond:env}.

\begin{figure}[!htbp]
	\centering
	\includegraphics[width=2.0in]{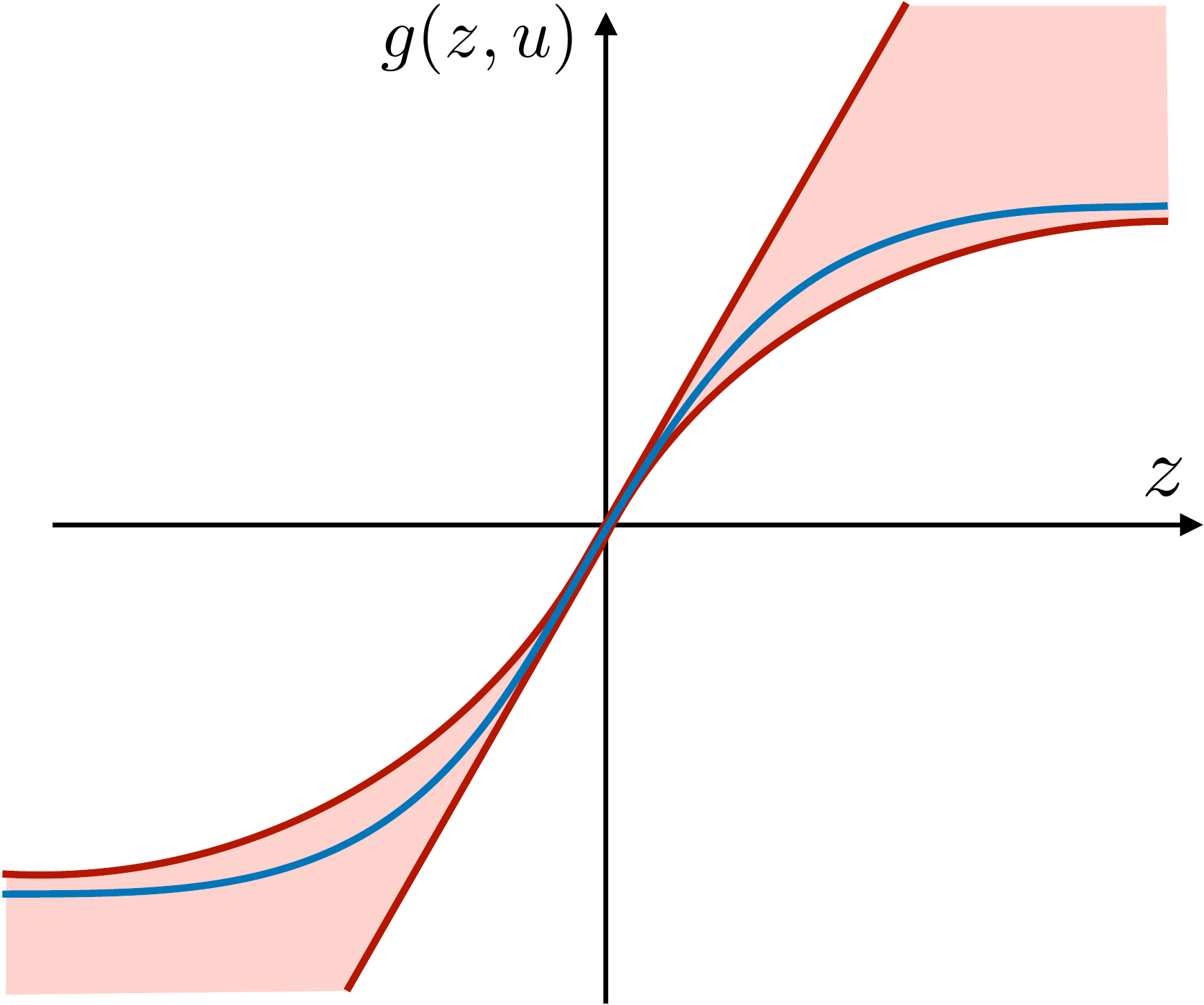}
	\caption{Illustration of a concave envelope.}
	\label{fig:convenv_ex}
\end{figure}

An example of such an envelope is shown in \cref{fig:convenv_ex}, which turns out to be necessary for enforcing convexity to a restricted set. These envelopes have flipped convexity and concavity compared to the envelopes used in convex relaxation \cite{Hijazi2017,Mitsos}. Given the model of the system, these envelopes are assumed to have a closed-form expression, and we will discuss the derivation of concave envelopes in the next section.

\subsubsection{Quadratic Concave Envelopes}
Concave envelopes that satisfy \cref{cond:env} can be obtained systematically based on the Taylor series of $g_k$. To make the notation more compact, let $y=\begin{bmatrix} z^T & u^T \end{bmatrix}^T$ and $y_0$ be the nominal value. The Taylor series of $g_k(y)$ is
\begin{displaymath}
g_k(y)=g_k(y^{(0)})+\nabla_y g_k(y-y^{(0)})+(y-y^{(0)})^TH(y)(y-y^{(0)})+\tau(y),
\end{displaymath}
where $H(y)\in\mathbf{R}^{(q+m)\times(q+m)}$ is the Hessian of $g_k$ evaluated at $y$, and $\tau(y)$ is the residual term of polynomial order greater than 2. Suppose the residual term can be bounded by
\begin{displaymath}
(y-y^{(0)})^T(Q^\ell_k-H(y))(y-y^{(0)})\leq\tau(y)\leq (y-y^{(0)})^T(Q^u_k-H(y))(y-y^{(0)}),
\end{displaymath}
where $Q^\ell,\,Q^u\in\mathbf{R}^{(q+m)\times(q+m)}$ are constant negative semi-definite and positive semi-definite matrices, respectively. If $g_k(y)$ is continuous and has a scalar argument, $Q^u$ and $Q^\ell$ can be computed by $Q^u=\sup_y \lvert \frac{d^2}{dy^2}g_k(y)\rvert$ and $Q^\ell=-Q^u$. For multi-variable functions, there are typically multiple $Q^u$ and $Q^\ell$ to choose from. (See bilinear function example with a parameter $\rho$ in \cref{apdx:bilinear}.)
Given the Taylor series, the quadratic concave envelope can be written as
\begin{equation}\begin{aligned}
g_k^u(z,u)&=g_k^{(0)}+\nabla g_k^{(0)}\begin{bmatrix}z-z^{(0)} \\ u-u^{(0)}\end{bmatrix}+\begin{bmatrix}z-z^{(0)} \\ u-u^{(0)}\end{bmatrix}^TQ_k^u\begin{bmatrix}z-z^{(0)} \\ u-u^{(0)}\end{bmatrix} \\
g_k^\ell(z,u)&=g_k^{(0)}+\nabla g_k^{(0)}\begin{bmatrix}z-z^{(0)} \\ u-u^{(0)}\end{bmatrix}+\begin{bmatrix}z-z^{(0)} \\ u-u^{(0)}\end{bmatrix}^TQ_k^\ell\begin{bmatrix}z-z^{(0)} \\ u-u^{(0)}\end{bmatrix}
\end{aligned}\end{equation}
where $g_k^{(0)}=g_k(z^{(0)},u^{(0)})$ and $\nabla g_k^{(0)}=\begin{bmatrix}\nabla_z g_k(z^{(0)},u^{(0)}) & \nabla_u g_k(z^{(0)},u^{(0)})\end{bmatrix}$. The envelopes derived with this procedure satisfy \cref{cond:env} with the nominal point at $(x^{(0)},u^{(0)})$. Examples of quadratic envelopes are provided in \cref{apdx:concave_env} for bilinear, trigonometric, and logistic functions, which were derived with the proposed procedure. When the quadratic envelopes are used, the resulting convex restriction will be convex quadratic inequality constraints.

\subsection{Bounds over Intervals}
Given the concave envelopes, the bound of $g_k$ over the polytope $\mathcal{P}(b)$ can be defined as
\begin{equation*}
g^\ell_{\mathcal{P},k}(u,b)\leq g_k(z,u)\leq g^u_{\mathcal{P},k}(u,b),
\end{equation*}
which is valid for all $z\in\{Cx\mid x\in\mathcal{P}(b)\}$. These bounds are defined as
\begin{equation*}
\begin{aligned}
g^u_{\mathcal{P},k}(u,b)&=\max_{x\in\mathcal{P}(b)}g^u_k(Cx,u) \\
g^\ell_{\mathcal{P},k}(u,b)&=\min_{x\in\mathcal{P}(b)}g^\ell_k(Cx,u).
\end{aligned}
\end{equation*}
Since $g^u_k(z,u)$ is a convex function, its maximum occurs at at least one of the vertices of the polytope $\mathcal{P}(b)$. Similarly, the minimum of concave $g^\ell_k(z,u)$ occurs at the vertex. The self-mapping condition in Brouwer's fixed point theorem can be viewed as solving a containment of the polytope $\mathcal{P}(b)$ into the inequality constrained sets \cref{eqn:iff_solvable,eqn:ineq}. Solving the containment problem is generally hard, but it becomes tractable if the polytope is in the vertex representation contained in a convex set \cite{Kellner2013}.
\subsection{Vertex Tracking}
By relaxing the equations with concave envelopes, the interval bound of $g_k(z,u)$ can be expressed by tracking all the vertices of the polytope
\begin{displaymath}\begin{aligned}
g^u_{\mathcal{P},k}(u,b)&=\max_{x\in\partial\mathcal{P}(b)}g^u_k(Cx,u) \\
g^\ell_{\mathcal{P},k}(u,b)&=\min_{x\in\partial\mathcal{P}(b)}g^\ell_k(Cx,u),
\end{aligned}\end{displaymath}
where $\partial\mathcal{P}(b)$ are the vertices of the polytope $\mathcal{P}(b)$. Although the number of vertices of the face-polytope $\mathcal{P}(b)$ grows exponentially with respect to the number of faces, the following lemma shows that only the vertices involved in $\mathcal{I}(k)$ need to be tracked.
\begin{lemma}
The interval bounds can be expressed with the inequalities
\begin{equation}\begin{aligned}
g^u_{\mathcal{P},k}(u,b)&\geq g^u_k(z,u), & \forall z&\in\{z\mid z_i\in\{z^\ell_i,z^u_i\},\, \forall i\in \mathcal{I}_k\} \\
g^\ell_{\mathcal{P},k}(u,b)&\leq g^\ell_k(z,u), & \forall z&\in\{z\mid z_i\in\{z^\ell_i,z^u_i\},\, \forall i\in \mathcal{I}_k\},
\end{aligned}\label{eqn:g_uell}\end{equation}
where these inequalities can be expressed by $2^{\mathcal{I}_k+1}$ inequalities by listing all possible vertices.
\label{lemma:vertex_tracking}
\end{lemma}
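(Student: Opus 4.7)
The plan is to exploit two structural properties of the concave envelopes: (i) convexity of $g_k^u$ (respectively concavity of $g_k^\ell$) in $z$, guaranteed by \cref{cond:env}, and (ii) sparsity, i.e.\ the dependence of $g_k$ (and hence $g_k^u$, $g_k^\ell$) on $z$ only through the coordinates indexed by $\mathcal{I}_k$. The argument is symmetric between the upper and lower bounds, so I would present it once for $g_k^u$ and simply flip signs for $g_k^\ell$.

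First I would use the defining property of $\mathcal{I}_k$ to observe that $g_k^u(z,u)$ factors through the projection $z \mapsto (z_i)_{i \in \mathcal{I}_k}$: the coordinates $z_j$ with $j \notin \mathcal{I}_k$ may be fixed to any values without changing the value of $g_k^u(z,u)$. Combined with the fact that for every $x \in \mathcal{P}(b)$ the vector $z = Cx$ satisfies $z^\ell \le z \le z^u$ and in particular $z_i \in [z_i^\ell, z_i^u]$ for $i \in \mathcal{I}_k$, this yields
\begin{equation*}
\max_{x \in \mathcal{P}(b)} g_k^u(Cx, u) \;\le\; \max_{\substack{z_i \in [z_i^\ell,\,z_i^u] \\ i \in \mathcal{I}_k}} g_k^u(z, u),
\end{equation*}
where the coordinates outside $\mathcal{I}_k$ on the right-hand side are immaterial.

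Next I would invoke convexity: since $g_k^u(\,\cdot\,, u)$ is convex, its restriction to the $|\mathcal{I}_k|$-dimensional box $\prod_{i \in \mathcal{I}_k}[z_i^\ell, z_i^u]$ is a convex function on a polytope, and therefore attains its maximum at one of the extreme points, namely the $2^{|\mathcal{I}_k|}$ vertices of that box. Consequently, imposing the inequality $g^u_{\mathcal{P},k}(u,b) \ge g_k^u(z,u)$ at each such vertex is sufficient to certify the interval upper bound across all of $\{Cx : x \in \mathcal{P}(b)\}$. Applying the mirror argument to the concave $g_k^\ell$ (minimum of a concave function over a polytope attained at a vertex) produces another $2^{|\mathcal{I}_k|}$ inequalities, giving the claimed total of $2^{|\mathcal{I}_k|+1}$.

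The only delicate point is the first reduction: dropping the ambient constraint $z = Cx$ and replacing $\mathcal{P}(b)$ by the full box in the $\mathcal{I}_k$-coordinates can be conservative when the projection of $\mathcal{P}(b)$ onto those coordinates is a strict subset of the box. I would flag that this introduces no unsoundness because the resulting $g^u_{\mathcal{P},k}$ is still a valid over-estimator of $g_k$ over the feasible $z$'s, which is precisely what the self-mapping argument of \cref{lemma:iff_solvable} requires; tightness is sacrificed in exchange for obtaining a closed-form, tractable set of inequalities whose cardinality is independent of $q$ and grows only with the sparsity parameter $|\mathcal{I}_k|$.
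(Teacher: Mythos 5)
Your argument is correct and is essentially the one the paper intends: the paper gives no formal proof of \cref{lemma:vertex_tracking}, but the surrounding prose invokes exactly the same two ingredients --- convexity of $g_k^u$ (concavity of $g_k^\ell$) placing the extremum at a vertex, and the sparsity index set $\mathcal{I}_k$ reducing the relevant vertices to the $2^{\lvert\mathcal{I}_k\rvert}$ corners of the box in those coordinates, hence $2^{\lvert\mathcal{I}_k\rvert+1}$ inequalities in total. Your explicit remark that replacing the projection of $\mathcal{P}(b)$ by the full box $\prod_{i\in\mathcal{I}_k}[z_i^\ell,z_i^u]$ is a (sound but possibly conservative) relaxation is a point the paper glosses over, and is worth keeping.
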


\cref{lemma:cont} from the previous section showed that the nonlinearity can be bounded by controlling $\lvert\mathcal{I}_k\rvert$ variables. Similarly, $g^u_{\mathcal{P},k}$ and $g^\ell_{\mathcal{P},k}$ can be expressed by inequalities involving $\lvert\mathcal{I}_k\rvert$ variables.
If the nonlinearity is decomposed in a way such that $\lvert\mathcal{I}_k\rvert$ does not grow with the problem size, the number of constraints involved is also independent of the problem size.

\subsection{Vertex Pruning}
It is not necessary to track all the vertices in \cref{eqn:g_uell} because the maximum or minimum never occurs at some of those vertices. As an example, consider the bilinear function in \cref{apdx:bilinear}. The maximum always occurs at vertices $(x^u,y^u)$ or $(x^\ell,y^\ell)$, and it is unnecessary to trace $(x^u,y^\ell)$ and $(x^\ell,y^u)$. Many of the vertices can be pruned from the candidates by exploiting this property.

\subsection{Convex Restriction and its Properties}
Given these considerations, the convex restriction of feasibility set can be expressed as an explicit condition. This condition was first provided in \cite{Lee2018}.

\begin{theorem}{(Convex Restriction of Feasibility Set)} \label{thm:CVXRS}
For a given explicit variable $u$, there exists an implicit variable $x$ that satisfies $f(x,u)=0$ and $h(x,u)\leq0$ if there exists
$b\in\mathbf{R}^{2q}$ such that
\begin{subequations}\begin{align}
K^+g^u_\mathcal{P}(u,b)+K^- g^\ell_\mathcal{P}(u,b) &\leq b \label{eqn:conv_restr_eq1}
\\
L^+\psi^u_\mathcal{P}(u,b)+L^-\psi^\ell_\mathcal{P}(u,b) &\leq 0,
\label{eqn:conv_restr_eq2}
\end{align}\label{eqn:conv_restr_eq}\end{subequations}
where $K^+_{ij}=\max\{K_{ij},0\}$ and $K^-_{ij}=\min\{K_{ij},0\}$ for each element of $K$.
\label{thm_feasibility}
\end{theorem}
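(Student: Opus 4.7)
The plan is to combine \cref{lemma:iff_solvable} with the envelope bounds so that, given $u$ and any $b$ satisfying \cref{eqn:conv_restr_eq}, I can first produce an implicit $x\in\mathcal{P}(b)$ with $f(x,u)=0$ and then verify that every such $x$ also satisfies $h(x,u)\leq 0$. The central observation is that both $K$ and $L$ should be split into their nonnegative and nonpositive parts, so that positive coefficients multiply the upper (convex) envelope while negative coefficients multiply the lower (concave) envelope, keeping the direction of the inequality correct after all sign flips.

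For the equality constraint, I begin from the pointwise bound
\[
K_i\, g(Cx,u) \;\leq\; K_i^+\, g^u(Cx,u) \;+\; K_i^-\, g^\ell(Cx,u),
\]
valid for all $x$, which is immediate from $g_k^\ell\leq g_k\leq g_k^u$ once one separates the entries of $K_i$ by sign. Restricting to $x\in\mathcal{P}(b)$, the per-column bounds $g_j^u(Cx,u)\leq g_{\mathcal{P},j}^u(u,b)$ and $g_j^\ell(Cx,u)\geq g_{\mathcal{P},j}^\ell(u,b)$ combine (the second multiplied by $K_{ij}^-\leq 0$ reverses direction) to give $\max_{x\in\mathcal{P}(b)} K_i\, g(Cx,u) \leq K_i^+ g^u_\mathcal{P}(u,b)+K_i^- g^\ell_\mathcal{P}(u,b)$. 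Hypothesis \cref{eqn:conv_restr_eq1} then yields $\max_{x\in\mathcal{P}(b)} K_i\, g(Cx,u)\leq b_i$ for every $i=1,\ldots,2q$, so \cref{lemma:iff_solvable} supplies an $x\in\mathcal{P}(b)$ with $f(x,u)=0$.

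For the inequality constraint, the same sign-split applied to $h=L\psi$ gives the pointwise bound $L_k\psi(Cx,u)\leq L_k^+\psi^u(Cx,u)+L_k^-\psi^\ell(Cx,u)$, and on $\mathcal{P}(b)$ each summand is further bounded by the corresponding interval-bound term, yielding $L_k\psi(Cx,u)\leq L_k^+\psi^u_\mathcal{P}(u,b)+L_k^-\psi^\ell_\mathcal{P}(u,b)\leq 0$ by \cref{eqn:conv_restr_eq2}. Since this holds uniformly for every $x\in\mathcal{P}(b)$, it applies in particular to the $x$ obtained above, so $h(x,u)\leq 0$. The one step that genuinely needs care is the sign bookkeeping for $K^-$ and $L^-$: because these multipliers are nonpositive, the envelope inequalities flip when multiplied through, and one must pair them with the \emph{lower} interval bounds rather than the upper ones. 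Beyond this, the argument is a direct assembly of \cref{lemma:iff_solvable} with the definitions of $g^{u,\ell}_\mathcal{P}$ and $\psi^{u,\ell}_\mathcal{P}$.
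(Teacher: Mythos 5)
Your proposal is correct and follows essentially the same route as the paper's own proof: split $K$ and $L$ by sign, bound $K_i g$ and $L_k\psi$ over $\mathcal{P}(b)$ via the concave envelopes and the interval bounds $g^{u,\ell}_\mathcal{P}$, $\psi^{u,\ell}_\mathcal{P}$, then invoke \cref{lemma:iff_solvable} for existence of $x\in\mathcal{P}(b)$ and note that the inequality bound holds uniformly on $\mathcal{P}(b)$. The only cosmetic difference is that you bound each envelope term columnwise where the paper passes through the intermediate step $\max_x\sum_j(\cdot)\leq\sum_j\max_x(\cdot)$; the content is identical.
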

\begin{proof} From Condition \cref{eqn:conv_restr_eq1}, for $i=1,\ldots,2q$,
\begin{displaymath}
\begin{aligned}
    \max_{x\in\mathcal{P}(b)} K_ig(Cx,u)&\leq \max_{x\in\mathcal{P}(b)} \left(K_i^+g^u(Cx,u)+K_i^-g^\ell(Cx,u)\right) \\
    &\leq K_i^+\max_{x\in\mathcal{P}(b)}g^u(Cx,u)+K_i^-\min_{x\in\mathcal{P}(b)}g^\ell(Cx,u) \\
    &= K_i^+\max_{x\in\partial\mathcal{P}(b)}g^u(Cx,u)+K_i^-\min_{x\in\partial\mathcal{P}(b)}g^\ell(Cx,u) \\
    &=K_i^+g^u_\mathcal{P}(u,b)+K_i^- g^\ell_\mathcal{P}(u,b)\leq b_i.
\end{aligned}
\end{displaymath}
From \cref{lemma:iff_solvable}, there exists a solution for the implicit variable, $x\in\mathcal{P}(b)$.
Similarly, from the condition \cref{eqn:conv_restr_eq2},
\begin{displaymath}
\begin{aligned}
    \max_{x\in\mathcal{P}(b)} L_i\psi(x,u)&\leq L_i^+ \psi^u_\mathcal{P}(u,b)+L_i^-\psi^\ell_\mathcal{P}(u,b)\leq 0,\ \ i=1,\ldots,s,
\end{aligned}
\end{displaymath}
so for all $x\in\mathcal{P}(b)$, $L\psi(Cx,u)\leq 0$. Therefore, there exists an implicit variable satisfying $f(x,u)=0$ and $h(x,u)\leq0$. 
\end{proof}

This is a sufficient condition for the existence of a feasible implicit variable for a given explicit variable. Note that the condition in \cref{eqn:conv_restr_eq} is a convex constraint with respect to both $u$ and $b$. This region in the explicit variable space will be denoted by $\mathcal{U}^\textrm{cvxrs}_{(0)}\subseteq\mathcal{U}$ where
\begin{displaymath}\begin{aligned}
\mathcal{U}^\textrm{cvxrs}_{(0)}=\{u\mid \exists\, b,\ K^+ g^u_\mathcal{P}(u,b)+K^- g^\ell_\mathcal{P}(u,b)&\leq b,\\
L^+ \psi^u_\mathcal{P}(u,b)+L^- \psi^\ell_\mathcal{P}(u,b) &\leq 0\}.
\end{aligned}\end{displaymath}
The subscript $(0)$ denotes that $(x^{(0)},\, u^{(0)})$ is used as the nominal point for constructing the convex restriction.

\begin{example}{(Quadratic Equations)}
Consider a quadratic equation with $x\in\mathbf{R}$ parametrized by $u\in\mathbf{R}^2$ from the introduction,
\begin{displaymath}
f(x,u)=x^2+u_1x+u_2,
\end{displaymath}
where there exist real solutions for $x$ if and only if $u_1^2-4u_2\geq0$. In addition we consider the inequality constraint, $-2\leq x \leq 2$. Define $z=x$ and the basis function $\psi(z,u)=f(x,u)$. The residual function is then $g(z,u)=\langle z, z-2z^{(0)}+u_1-u_1^{(0)}\rangle+u_2$. The bilinear envelope in \cref{apdx:bilinear} can be applied to $z$ and $z-2z^{(0)}+u_1-u_1^{(0)}$ with $\rho_1=\rho_2=1$. Let the derivative of the equation with respect to $x$ evaluated at the nominal point be denoted by  $J_0=2x^{(0)}+u_1^{(0)}$. The convex restriction condition in \cref{eqn:conv_restr_eq} gives the following closed-form expression,
\begin{equation*}\begin{aligned}
\mathcal{U}^\textrm{cvxrs}_{(0)}=\{u\mid \exists\, (z^u,\, z^\ell),\  z^u\leq2,\ z^\ell\geq-2, \hskip13em&\\
-\lvert J^{-1}_0\rvert\left(x^{(0)}(u_1-u_1^{(0)})-(x^{(0)})^2-0.25(u_1-u_1^{(0)})^2+u_2\right)&\leq z^u,\\
\lvert J^{-1}_0\rvert\left(x^{(0)}(u_1-u_1^{(0)})-(x^{(0)})^2+0.25(2z^u-2x^{(0)}+u_1-u_1^{(0)})^2+u_2\right)& \leq -z^\ell ,\\
\lvert J^{-1}_0\rvert\left(x^{(0)}(u_1-u_1^{(0)})-(x^{(0)})^2+0.25(2z^\ell-2x^{(0)}+u_1-u_1^{(0)})^2+u_2\right)& \leq -z^\ell \}.
\end{aligned}\end{equation*}

\cref{fig:quad_ex} shows this region in explicit variable space where both equality and inequality constraints were considered with the nominal point at $(x^{(0)},u^{(0)})=(0,[4, 0])$. 


\begin{figure}[!htbp]
	\centering
	\includegraphics[width=4in]{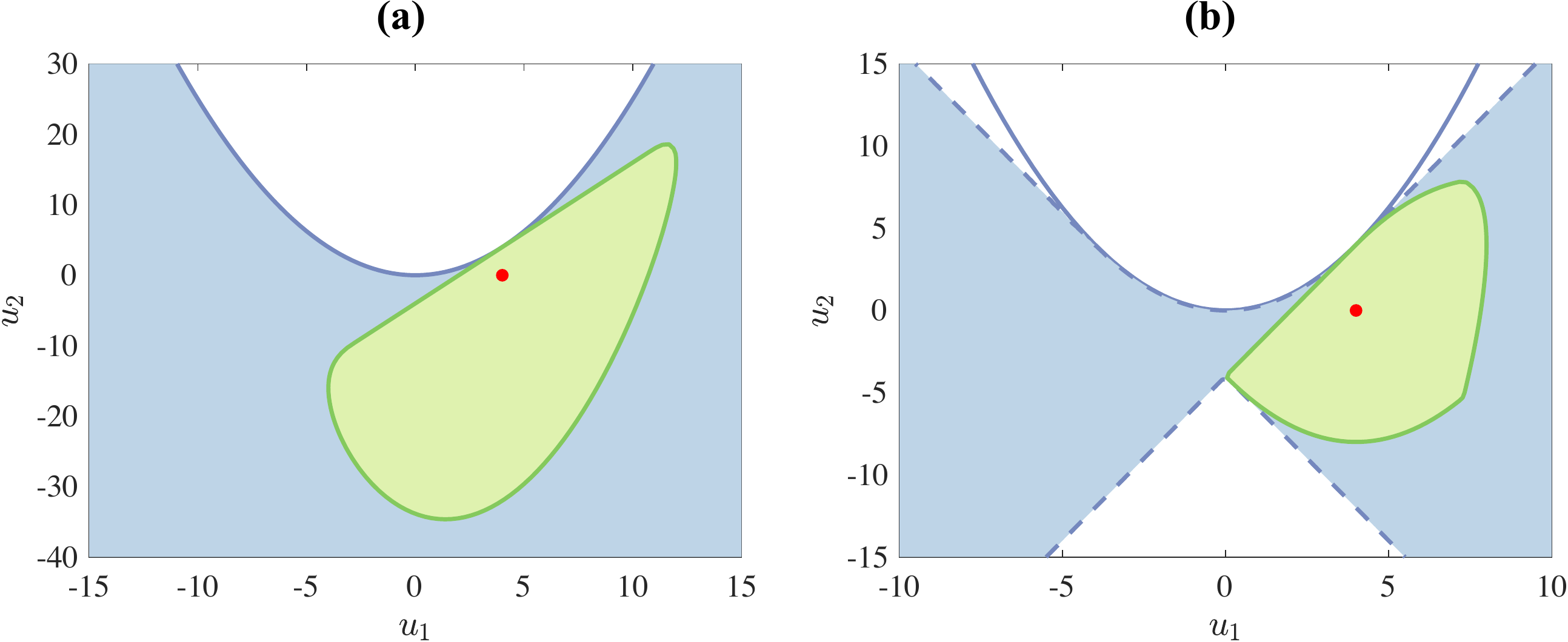}
	\caption{The convex restriction of a quadratic equation with (a) the solvability of the equality constraints and (b) the feasibility with the additional inequality constraint, $x\in[-2,2]$. The blue region shows the true feasible region, and the green region shows the convex restriction. The red dot marks the nominal point.}
	\label{fig:quad_ex}
\end{figure}
\end{example}

While the example considers a simple equation, the convex restriction creates a scalable condition for any sparse system of equations where $\lvert \mathcal{I}\rvert$ is finite, independent of the problem size.

\begin{remark}{(Scalability of Convex Restriction)} The number of constraints involved in convex restriction is bounded by
$q\cdot2^{\lvert \mathcal{I}\rvert+2}+2n+s$.
\end{remark}
There are $2n+s$ inequality constraints involved in \cref{eqn:conv_restr_eq}, and $\sum_{k=1}^q2^{\lvert \mathcal{I}(k)\rvert}$ inequality constraints involved in $g^u$, $g^\ell$, $\psi^u$ and $\psi^\ell$ as shown in \cref{eqn:g_uell}.
As we saw in \cref{ex:oscillators}, there exists a representation such that $\lvert \mathcal{I}\rvert$ is independent of the size of the original problem in many applications. Then the number of constraints involved in the convex restriction grows linearly with respect to $n$ and $s$.

\begin{remark}{(Retrieval of Implicit Variable)} \label{remark:retrieval}
Consider a sequence $\{x_k\}$ generated by $x_k=-(M\Lambda C)^{-1}Mg(x_{k-1},u)$ with $u\in\mathcal{U}^\textrm{cvxrs}$ and the initial condition, $x_0=x^{(0)}$. If the solution converges to a fixed point, $x^*$, then $f(x^*,u)=0$ and $h(x^*,u)=0$.
\label{remark:retrieval_implicit}\end{remark}

Similar to any numerical approaches for solving nonlinear equations, the above sequence is not guaranteed to converge. However, the convex restriction condition guarantees that the sequence will not diverge outside of the closed polytope $\mathcal{P}(b)$. Instead of Newton's method, the above iteration can be an alternative method to retrieve the implicit variables more efficiently without requiring the inversion of any matrix.

\begin{lemma}{(Non-emptiness of Convex Restriction)} The convex restriction is non-empty and contains the nominal point. Moreover, if there exists $b\in\mathbf{R}^{2q}$ such that
\begin{equation}\begin{aligned}
K^+g^u_\mathcal{P}(u,b)+K^- g^\ell_\mathcal{P}(u,b) &< b \\
L^+\psi^u_\mathcal{P}(u,b)+L^-\psi^\ell_\mathcal{P}(u,b) &< 0,
\end{aligned}\end{equation}
the convex restriction contains an open non-empty neighborhood around the nominal point. That is $\forall\, v\in\mathbf{R}^m,\ \exists\,\varepsilon>0$ such that $u^{(0)}+\varepsilon v\in\mathcal{U}^\textbf{cvxrs}_{(0)}$.
\label{lemma:non_empty}
\end{lemma}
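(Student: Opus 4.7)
The plan is to handle the two parts of the lemma in sequence. For the first part, namely that $u^{(0)} \in \mathcal{U}^{\textrm{cvxrs}}_{(0)}$, I would construct an explicit witness $b$ that collapses the self-mapping polytope to a single point. Choose $b^{(0)} = [(z^{(0)})^\top,\, -(z^{(0)})^\top]^\top$ with $z^{(0)} = Cx^{(0)}$, so that $\mathcal{P}(b^{(0)}) = \{x^{(0)}\}$. The tightness clause (ii) of \cref{cond:env} then forces $g^u_{\mathcal{P},k}(u^{(0)}, b^{(0)}) = g^\ell_{\mathcal{P},k}(u^{(0)}, b^{(0)}) = g_k(z^{(0)}, u^{(0)})$ and likewise for $\psi$. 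Substituting into \cref{eqn:conv_restr_eq1} gives $K^+ g^u_\mathcal{P} + K^- g^\ell_\mathcal{P} = (K^+ + K^-)\, g(z^{(0)}, u^{(0)}) = K\, g(z^{(0)}, u^{(0)})$. Since $M\psi(z^{(0)}, u^{(0)}) = f(x^{(0)}, u^{(0)}) = 0$ and $g = \psi - \Lambda z$, we get $Mg(z^{(0)}, u^{(0)}) = -(M\Lambda C)\, x^{(0)}$, so $K\, g(z^{(0)}, u^{(0)}) = -A(M\Lambda C)^{-1}\bigl(-(M\Lambda C)\, x^{(0)}\bigr) = Ax^{(0)} = b^{(0)}$, which certifies \cref{eqn:conv_restr_eq1} with equality. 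An analogous collapse for $\psi$ gives $L^+\psi^u_\mathcal{P} + L^-\psi^\ell_\mathcal{P} = L\psi(z^{(0)}, u^{(0)}) = h(x^{(0)}, u^{(0)}) \leq 0$ by \cref{cond:nominal_point}, so \cref{eqn:conv_restr_eq2} also holds, and hence $u^{(0)} \in \mathcal{U}^{\textrm{cvxrs}}_{(0)}$.

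For the second claim, I would invoke joint continuity. By \cref{lemma:vertex_tracking}, $g^u_{\mathcal{P},k}(u,b)$ is a maximum and $g^\ell_{\mathcal{P},k}(u,b)$ a minimum of a finite set of continuous functions $g^u_k(z,u)$, $g^\ell_k(z,u)$ evaluated at vertex points whose coordinates depend linearly (hence continuously) on $b$. Therefore all four bound functions appearing in \cref{eqn:conv_restr_eq} are jointly continuous in $(u,b)$. Under the strict-inequality hypothesis, some $b^{\star}$ makes both left-hand sides strictly less than the corresponding right-hand sides at $u = u^{(0)}$. By joint continuity, the set $\mathcal{N}$ of pairs $(u,b)$ at which the strict inequalities hold is open and contains $(u^{(0)}, b^{\star})$. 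In particular, for any direction $v \in \mathbf{R}^m$ there exists $\varepsilon > 0$ such that $(u^{(0)} + \varepsilon v, b^{\star}) \in \mathcal{N}$, which in turn implies $u^{(0)} + \varepsilon v \in \mathcal{U}^{\textrm{cvxrs}}_{(0)}$, as required.

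The main obstacle, to the extent there is one, is recognizing why the strict-inequality hypothesis is essential: at the witness $b^{(0)}$ constructed in the first part, the equality-type constraint \cref{eqn:conv_restr_eq1} holds only with equality, so without an additional assumption the constraint surface could be tangent to $u^{(0)}$ and block perturbations in some directions. Once the strict inequalities are in place, the continuity of the vertex bounds in $(u,b)$, which is immediate from \cref{lemma:vertex_tracking}, does all the remaining work, and no fixed-point argument or implicit-function theorem is needed for this lemma.
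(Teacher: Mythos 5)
Your proposal is correct and follows essentially the same route as the paper's proof: the same witness $b^{(0)}=Ax^{(0)}$ collapsing $\mathcal{P}(b)$ to $\{x^{(0)}\}$, the tightness clause of \cref{cond:env} to reduce the envelope bounds to $Kg(z^{(0)},u^{(0)})=b^{(0)}$ and $L\psi(z^{(0)},u^{(0)})=h(x^{(0)},u^{(0)})\leq 0$, and a continuity-plus-strict-inequality argument for the open neighborhood. Your version merely spells out the algebra behind $Kg=b^{(0)}$ and justifies the continuity of $g^u_\mathcal{P},g^\ell_\mathcal{P}$ via the finite vertex maxima rather than via convexity, which are harmless refinements of the same argument.
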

\begin{proof}
Let $b^{(0)}=Ax^{(0)}$, then $\mathcal{P}(b)=\{x^{(0)}\}$ since $\mathcal{P}(b)$ is closed. Then,
\begin{equation*}
\begin{aligned}\
K^+g^u_\mathcal{P}(u^{(0)},b^{(0)})+K^- g^\ell_\mathcal{P}(u^{(0)},b^{(0)})&=Kg(x^{(0)},u^{(0)})=b^{(0)} \\
L^+\psi^u_\mathcal{P}(u^{(0)},b^{(0)})+L^-\psi^\ell_\mathcal{P}(u^{(0)},b^{(0)})&=L\psi(x^{(0)},u^{(0)})=0,
\end{aligned}
\end{equation*}
from \cref{cond:env}, so $(u^{(0)},b^{(0)})$ is always feasible to the constraints in \cref{eqn:conv_restr_eq}, and thus the convex restriction is always non-empty. Since $g^u_\mathcal{P}$ and $g^\ell_\mathcal{P}$ are convex and concave respectively, they are continuous functions with respect to $b$ and $u$. Then for all $v\in\mathbf{R}^m$, there exists $\varepsilon>0$ such that
\begin{displaymath}
\begin{aligned}
K^+ g^u_\mathcal{P}(u+\varepsilon v,b)+K^- g^\ell_\mathcal{P}(u+\varepsilon v,b)&\leq b \\
L^+ \psi^u_\mathcal{P}(u+\varepsilon v,b)+L^- \psi^\ell_\mathcal{P}(u+\varepsilon v,b)&\leq0.
\end{aligned}
\end{displaymath}
Therefore, $u+\varepsilon v\in\mathcal{U}^\textrm{cvxrs}$ from \cref{thm_feasibility}, and the convex restriction contains an open non-empty neighborhood around its nominal point.
\end{proof}

Moreover, the condition in \cref{thm:CVXRS} is an equivalent condition to the original feasibility constraints if the original constraints are convex constraints.
\begin{corollary}{(Equivalence for Convex Constraints)}
Suppose that the constraints are convex constraints: $f(x,u)$ is linear and $h(x,u)$ is convex with respect to $x$ and $u$. Then $u\in\mathcal{U}$ if and only if there exists $b\in\mathbf{R}^{2q}$ that satisfies \cref{eqn:conv_restr_eq}.
\label{corol:equiv_cvx}
\end{corollary}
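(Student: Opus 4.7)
The ``if'' direction is immediate from \cref{thm:CVXRS}, so the plan is to establish the converse: given $u \in \mathcal{U}$, construct $b \in \mathbf{R}^{2q}$ satisfying \cref{eqn:conv_restr_eq}. Let $x^\star$ be a witness such that $f(x^\star, u) = 0$ and $h(x^\star, u) \leq 0$, and set $z^\star = C x^\star$. First I would take $b = A x^\star$, so that the self-mapping polytope $\mathcal{P}(b)$ collapses to the singleton $\{x^\star\}$; the interval bounds then reduce to the pointwise evaluations $g^u_\mathcal{P}(u, b) = g^u(z^\star, u)$, $g^\ell_\mathcal{P}(u, b) = g^\ell(z^\star, u)$, and likewise for $\psi$.

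Next I would exploit the convex structure to make the envelopes tight. Since $f$ is linear, I can choose the decomposition so that $g(z, u) = \psi(z, u) - \Lambda z$ is linear in $(z, u)$ (for instance, by taking $\psi$ to include $z$, $u$, and a constant, with $\Lambda$ the identity on the $z$-block); then $g$ is simultaneously convex and concave, and taking $g^u = g^\ell = g$ satisfies \cref{cond:env} trivially. For the convex inequality $h$, I would include each $h_i$ as its own basis entry so that the corresponding row of $L$ is a nonnegative standard basis vector (in particular $L^- = 0$ on these rows); take the convex upper envelope tight, $\psi^u_k = \psi_k = h_i$, and take the concave lower envelope to be the affine supporting hyperplane at the nominal point, which satisfies \cref{cond:env} because an affine tangent to a convex function agrees with it to first order. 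With these choices, the equality part of \cref{eqn:conv_restr_eq} becomes $K g(z^\star, u) \leq A x^\star$, and the identity $-(M\Lambda C)^{-1} M g(z^\star, u) = x^\star$, which is precisely the fixed-point form of the linear equation $f(x^\star, u) = 0$, shows that this holds with equality. The inequality part collapses to $h(x^\star, u) \leq 0$, which is the feasibility hypothesis.

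The main obstacle will be pinning down the basis and envelope choice so that the construction above is consistent with the framework of \cref{sec:prelim}. The decomposition of $f$ and $h$ into $M \psi$ and $L \psi$ is not unique, and a looser envelope (a quadratic one, say) would in general leave $\mathcal{U}^\textrm{cvxrs}_{(0)}$ strictly inside $\mathcal{U}$, so the corollary must be read as asserting the existence of a natural, tight choice that yields equivalence. A secondary issue is ensuring that the sign-splitting $L^+, L^-$ plays well with the envelope direction: isolating each $h_i$ as its own basis component is precisely what forces $L \geq 0$ and hence collapses $L^+ \psi^u_\mathcal{P} + L^- \psi^\ell_\mathcal{P}$ back to $h(x^\star, u)$, rather than to a mixture that would require the concave underestimator to also be tight.
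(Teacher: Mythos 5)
Your proposal is correct and follows essentially the same route as the paper's proof: fix the natural decomposition $z=x$ with $\psi=\begin{bmatrix}f^T & h^T\end{bmatrix}^T$ so that $g$ is affine and $h^u=h$ is tight, invoke \cref{thm:CVXRS} for sufficiency, and for necessity collapse the polytope to the singleton $b=Ax^\star$ at a feasible witness so that \cref{eqn:conv_restr_eq} reduces to the fixed-point identity and $h(x^\star,u)\leq 0$. Your additional care about the sign-splitting $L^+,L^-$ and the supporting-hyperplane lower envelope for $h$ only makes explicit what the paper leaves implicit.
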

\begin{proof}
Consider the decomposed representation of the constraints using the basis function $\psi(z,u)=\begin{bmatrix}f(z,u)^T & h(z,u)^T\end{bmatrix}^T$ with $z=x$, and $h^u(x,u)=h(x,u)$ since $h$ is already a convex function. Following the convex restriction procedure, the resulting condition \cref{eqn:conv_restr_eq} can be written as
\begin{equation}
z^\ell\leq-J_f^{-1}f(0,u)\leq z^u \hskip1em \text{and} \hskip1em h_k(z,u)\leq 0,\ \forall z\in\{z\mid z_i\in\{z^\ell_i, z^u_i\},\,i\in\mathcal{I}_k\},
\label{eqn:cvx_equiv}\end{equation}
for $k=1,...,s$.
From \cref{thm:CVXRS}, $f(x,u)=0$ and $h(x,u)\leq0$. 
To prove that this is a necessary condition, suppose $x$ and $u$ satisfy $f(x,u)=0$ and $h(x,u)\leq0$. Choose $z^u=z^\ell=x$, then it satisfies \cref{eqn:cvx_equiv}, and thus is feasible to \cref{eqn:conv_restr_eq}.
\end{proof}

\cref{corol:equiv_cvx} shows that the convex restriction can retrieve the original feasibility set if the original set is convex. If the feasibility set is non-convex, the convex restriction fundamentally cannot be equivalent to the feasibility set.
Note that the condition in \cref{lemma:iff_solvable} was a necessary and sufficient condition for feasibility, and there were two main steps that introduced conservatism of the convex restriction relative to $\mathcal{U}$. First is the tightness of the concave envelope. This is an unavoidable limitation where the nonlinear functions have to be bounded by concave envelopes. Second is the decomposition of the basis functions, and the use of the fact that the maximum of the sum is always less than the sum of the maximum,
\begin{displaymath}
\begin{aligned}
\max_{x\in\mathcal{P}(b)} K_i^+g^u(Cx,u)\leq K_i^+\max_{x\in\mathcal{P}(b)}g^u(Cx,u). \end{aligned}
\end{displaymath}
The more variables each combination of $g_i$ and $g_j$ share, the less conservative the convex restriction will be, but the complexity of the restriction will increase as it increases the degree of the sparsity $\lvert\mathcal{I}\rvert$. The next example shows this relationship more explicitly.

\begin{example}{(Conservatism v.s. Complexity Trade-off)} \label{ex:consv_vs_cplx}
Consider the following system of polynomial equations,
\begin{equation*} \begin{aligned}
x_1x_2+...+x_1x_n+u_1&=0 \\
x_i+u_i&=0,\ \  i=2,...,n \\
x_1x_2+...+x_1x_n&\leq10.
\end{aligned} \end{equation*}
For a given $k$, let us select the basis function to be
\begin{equation*}
\psi^{(k)}(z,u)=\begin{bmatrix} \sum_{i=2}^kx_1x_i,& x_1x_{k+1}, & ... & x_1x_n, & x^T, & u^T\end{bmatrix}^T,
\end{equation*}
with $z=x$. Decreasing $k$ decomposes the representation further and leads to a more sparse representation.
\cref{fig:consv_vs_cplx} shows the trade-off between the conservatism and the complexity as $k$ varies. 
The conservatism was quantified by solving $\min_{u\in\mathcal{U}^{\textrm{cvxrs},(k)}_{(0)}} u_1-u_1^*$ where $\mathcal{U}^{\textrm{cvxrs},(k)}_{(0)}$ is the convex restriction constructed with the basis function $\psi^{(k)}$ and $u_1^*=-10$ is the global optimal value. 
The complexity was quantified by the number of constraints involved in the convex restriction, which is proportional to $2^{k}$ for a naive implementation without vertex pruning. 
The degree of sparsity for $\psi^{(k)}_1=\langle x_1,\sum_{i=2}^kx_i\rangle$ is $\mathcal{I}_1=k$, and the vertex tracking require all combinations of $x_i\in\{x_i^u,\, x_i^\ell\}$ for $i=1,...,k$. However, this is a special case where the vertex pruning drastically reduces the number of constraints regardless of $k$. The maximum of $\psi^{(k)}_1$ occurs at $(x^u_1,\sum_{i=2}^kx^u_i)$ or $(x^\ell_1,\sum_{i=2}^kx^\ell_i)$ and only 2 vertices need to be tracked instead of $2^k$ vertices, and the restriction can scale without sacrificing the performance in this example.

\begin{figure}[!htbp]
	\centering
	\includegraphics[width=2.1in]{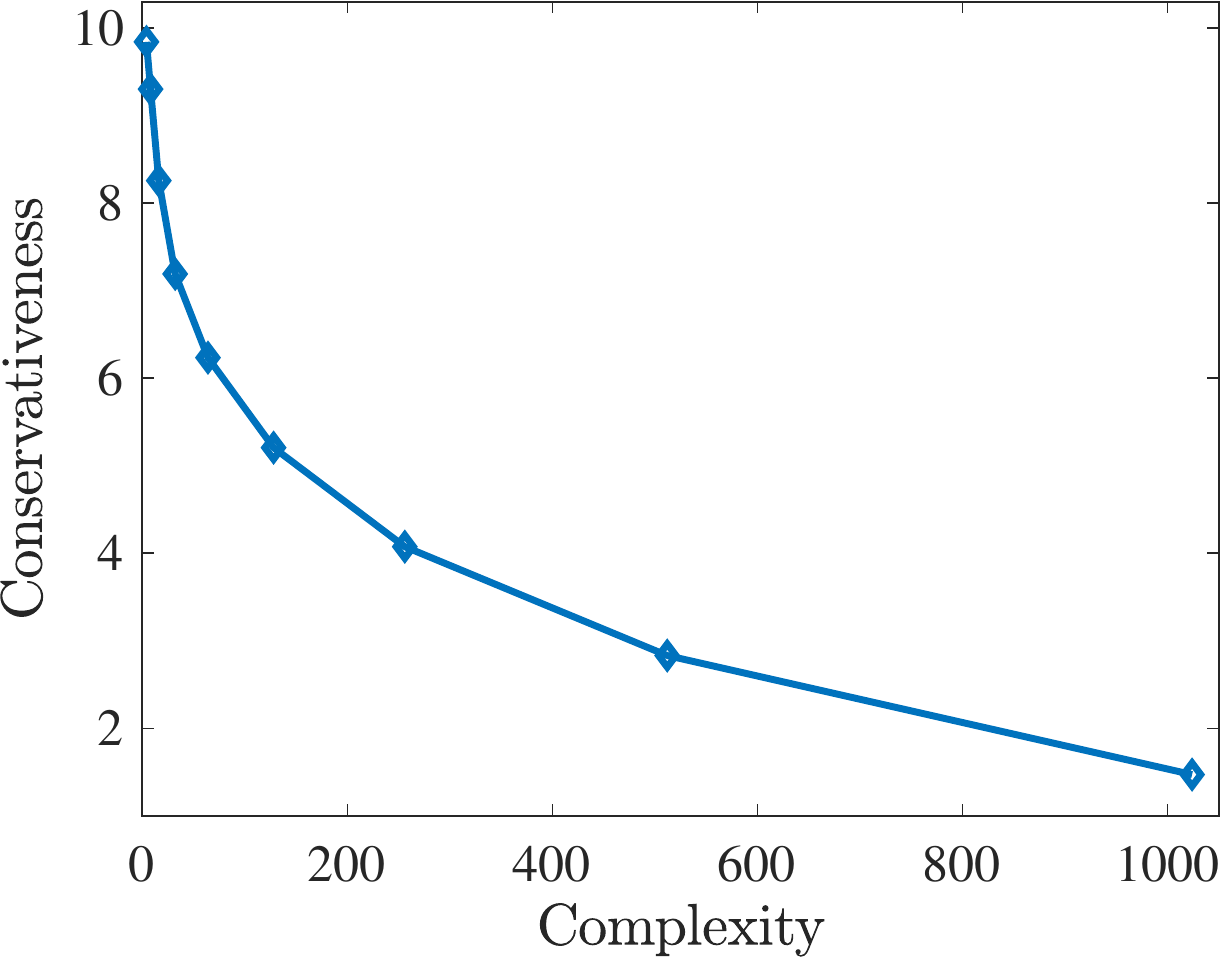}
	\caption{Illustration of the trade-off between the complexity and the conservatism. The complexity is quantified by the number of constraints involved, and the conservatism is quantified by the optimality gap.}
	\label{fig:consv_vs_cplx}
\end{figure}
\end{example}

\section{Convex Restriction under Uncertainty}\label{sec:rscrs}
In this section, we extend the convex restriction to include uncertain variables that are bounded by a given uncertainty set $\mathcal{W}\subseteq\mathbf{R}^r$.
We will assume that there is some known nominal value of the uncertain variable, which will be denoted by $w^{(0)}$.
The set of robust feasible explicit variables is denoted by
\begin{displaymath}
\mathcal{U}_\mathcal{W}=\{u\mid \forall w\in\mathcal{W},\, \exists\, x,\ f(x,u,w)=0,\ h(x,u,w)\leq 0  \}.
\end{displaymath}

\subsection{General Nonlinear Constraints}
The idea remains the same as the nominal constraint, and the only modification is that the concave envelopes need to capture the uncertainty set.
Similar to the previous section, the nonlinear functions are expressed by a linear combination of basis functions,
\begin{subequations} \begin{align}
f(x,u,w)&=M\psi(z,u,w) \label{eqn:robust_eq}\\
h(x,u,w)&=L\psi(z,u,w) \label{eqn:robust_ineq}.
\end{align}\end{subequations}
Equation \cref{eqn:robust_eq} can be written in the fixed point form,
\begin{displaymath}
    x=-(M\Lambda C)^{-1}Mg(z,u,w),
\end{displaymath} 
where $g(z,u,w)=\psi(z,u,w)-\Lambda z$. The matrix $\Lambda$ is chosen in the same way as \cref{subsec:Fixed_pt_eqn}, which is $\Lambda=\nabla_z \psi(z,u^{(0)},w^{(0)})\mid_{z=z^{(0)}}$ for differentiable $f$.
Let the nonlinear residual be bounded by
\begin{equation}
g^\ell_{\mathcal{W},k}(z,u)\leq g_k(z,u,w)\leq g^u_{\mathcal{W},k}(z,u), \ \forall w\in\mathcal{W},
\label{eqn:robust_envelope} \end{equation}
where $g^u_\mathcal{W}$ is a convex over-estimator and $g^\ell_\mathcal{W}$ is a concave under-estimator of $g$ over the uncertainty set $\mathcal{W}$.
Note that when we introduce uncertainty, we cannot satisfy $\cref{cond:env}$ (ii) and (iii) for the basis functions that are dependent on the uncertain variable.
Then the bounds over $\mathcal{P}$ and $\mathcal{W}$ can be expressed as

\begin{displaymath}\begin{aligned}
g^u_{\mathcal{PW},k}(u,b)&\geq g^u_{\mathcal{W},k}(z,u) & \forall z&\in\{z\mid z_i\in\{z^\ell_i,z^u_i\},\, \forall i\in \mathcal{I}_k\} \\
g^\ell_{\mathcal{PW},k}(u,b)&\leq g^\ell_{\mathcal{W},k}(z,u) & \forall z&\in\{z\mid z_i\in\{z^\ell_i,z^u_i\},\,\forall i\in \mathcal{I}_k\},
\end{aligned}\end{displaymath}
where the subscript $\mathcal{PW}$ indicates that it is a valid bound over the self-mapping polytope and the uncertainty set. Given these definitions, the following theorem provides a robust feasibility condition.

\begin{theorem}{(Robust Feasibility under General Uncertainty)}
For a given explicit variable $u$, there exists an implicit variable $x$ that satisfies $f(x,u,w)=0$ and $h(x,u,w)\leq0$ for all $w\in\mathcal{W}$ if there exists
$b\in\mathbf{R}^{2q}$ such that
\begin{subequations}\begin{align}
K^+g^u_\mathcal{PW}(u,b)+K^- g^\ell_\mathcal{PW}(u,b) &\leq b \label{eqn:robust1_conv_restr_eq1}
\\
L^+\psi^u_\mathcal{PW}(u,b)+L^-\psi^\ell_\mathcal{PW}(u,b) &\leq 0.
\label{eqn:robust1_conv_restr_eq2}
\end{align}\label{eqn:robust1_conv_restr_eq}\end{subequations}
\label{thm:robust1_feasibility}
\end{theorem}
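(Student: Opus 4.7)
The plan is to mirror the proof of Theorem \ref{thm:CVXRS} almost verbatim, treating $w$ as a fixed parameter at each step and exploiting the fact that the envelopes $g^u_\mathcal{W}, g^\ell_\mathcal{W}$ and the bounds $g^u_\mathcal{PW}, g^\ell_\mathcal{PW}$ are by construction uniform in $w\in\mathcal{W}$. The theorem only requires that for each $w\in\mathcal{W}$ there exists some $x$ (which may depend on $w$) satisfying both constraints, so a pointwise Brouwer argument is enough; we do not need a single $x$ that works simultaneously for all $w$.

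First I would fix an arbitrary $w\in\mathcal{W}$ and use \cref{lemma:iff_solvable}, applied to the function $g(\cdot,u,w)$ with matrix $\Lambda$ chosen as in \cref{subsec:Fixed_pt_eqn} so that $M\Lambda C$ is invertible. For any $x\in\mathcal{P}(b)$ and the chosen $w$, the robust envelope condition \cref{eqn:robust_envelope} and the vertex-based definitions of $g^u_\mathcal{PW},g^\ell_\mathcal{PW}$ give the chain of inequalities
\begin{displaymath}
g^\ell_{\mathcal{PW},k}(u,b)\;\leq\; g^\ell_{\mathcal{W},k}(Cx,u)\;\leq\; g_k(Cx,u,w)\;\leq\; g^u_{\mathcal{W},k}(Cx,u)\;\leq\; g^u_{\mathcal{PW},k}(u,b).
\end{displaymath}
Splitting each row of $K$ into positive and negative parts $K_i^+,K_i^-$ and maximizing over $x\in\mathcal{P}(b)$ then yields
\begin{displaymath}
\max_{x\in\mathcal{P}(b)} K_i g(Cx,u,w)\;\leq\; K_i^+ g^u_\mathcal{PW}(u,b)+K_i^- g^\ell_\mathcal{PW}(u,b)\;\leq\; b_i,
\end{displaymath}
where the last inequality is exactly hypothesis \cref{eqn:robust1_conv_restr_eq1}. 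By \cref{lemma:iff_solvable} applied with $u$ and $w$ fixed, there exists $x\in\mathcal{P}(b)$ with $f(x,u,w)=0$.

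For the inequality constraints, the argument is direct: for every $x\in\mathcal{P}(b)$ and every $w\in\mathcal{W}$,
\begin{displaymath}
L_i\psi(Cx,u,w)\;\leq\; L_i^+\psi^u_\mathcal{PW}(u,b)+L_i^-\psi^\ell_\mathcal{PW}(u,b)\;\leq\; 0
\end{displaymath}
by the same vertex-bound reasoning together with \cref{eqn:robust1_conv_restr_eq2}, so in particular the fixed point produced above satisfies $h(x,u,w)\leq 0$. Since $w\in\mathcal{W}$ was arbitrary, this proves robust feasibility.

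The only mild obstacle is bookkeeping rather than technique: one must verify that the envelope bounds remain valid when the base point involves a nominal uncertainty $w^{(0)}$ for which \cref{cond:env}\,(ii)--(iii) may no longer hold in the $w$ direction. This is precisely why the envelope is defined uniformly in $w$ in \cref{eqn:robust_envelope}, and no properties beyond the uniform two-sided inequality are used in the proof. Apart from this substitution $g\mapsto g(\cdot,u,w)$, $\psi\mapsto\psi(\cdot,u,w)$, the chain of inequalities and the invocation of Brouwer's fixed point theorem are identical to the nominal case.
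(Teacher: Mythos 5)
Your proposal is correct and follows essentially the same route as the paper's proof: bound $K_i g(Cx,u,w)$ uniformly over $x\in\mathcal{P}(b)$ and $w\in\mathcal{W}$ via the robust envelopes $g^u_\mathcal{PW},g^\ell_\mathcal{PW}$, invoke \cref{lemma:iff_solvable} to get a fixed point for each $w$, and handle the inequality constraints by the same vertex-bound argument. Your explicit remark that the Brouwer argument is applied pointwise in $w$ (so the implicit variable may depend on $w$) and your note about \cref{cond:env}\,(ii)--(iii) failing in the $w$ direction are just careful readings of what the paper does implicitly, not a different method.
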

\begin{proof}
The proof remains mostly similar to \cref{thm:CVXRS}. The condition \cref{eqn:robust1_conv_restr_eq} ensures that
\begin{displaymath}
\begin{aligned}
    \sup_{w\in\mathcal{W}}\max_{x\in\mathcal{P}(b)} K_ig(x,u,w)&\leq K_i^+\max_{x\in\mathcal{P}(b)}g^u_\mathcal{W}(x,u)+K_i^-\min_{x\in\mathcal{P}(b)}g^\ell_\mathcal{W}(x,u) \\
    &\leq K_i^+g^u_\mathcal{PW}(u,b)+K_i^- g^\ell_\mathcal{PW}(u,b)\leq b_i.
\end{aligned}
\end{displaymath}
From  \cref{lemma:iff_solvable}, there exists an implicit variable $x\in\mathcal{P}(b)$ for all $w\in\mathcal{W}$.
Similarly,
\begin{displaymath}
\begin{aligned}
    \sup_{w\in\mathcal{W}}\max_{x\in\mathcal{P}(b)} L_i\psi(x,u)&\leq L_i^+ \psi^u_\mathcal{PW}(u,b)+L_i^-\psi^\ell_\mathcal{PW}(u,b)\leq 0,\ \ i=1,\ldots,s,
\end{aligned}
\end{displaymath}
so for all $x\in\mathcal{P}(b)$ and $w\in\mathcal{W}$, $L\psi(Cx,u,w)\leq 0$. Therefore, there exists an implicit variable satisfying $f(x,u,w)=0$ and $h(x,u,w)\leq0$ for all $w\in\mathcal{W}$. 
\end{proof}

The convex restriction under uncertainty will be denoted by $\mathcal{U}^\textrm{cvxrs}_{\mathcal{W},(0)}\subseteq\mathcal{U}_\mathcal{W}$ where
\begin{displaymath}\begin{aligned}
\mathcal{U}^\textrm{cvxrs}_{\mathcal{W},(0)}=\{u\mid \exists\, b,\ K^+ g^u_\mathcal{PW}(u,b)+K^- g^\ell_\mathcal{PW}(u,b)&\leq b,\\
L^+ \psi^u_\mathcal{PW}(u,b)+L^- \psi^\ell_\mathcal{PW}(u,b) &\leq 0\}.
\end{aligned}\end{displaymath}
The subscript $(0)$ again indicates that the nominal point is $(x^{(0)},u^{(0)},w^{(0)})$.

When the explicit variables are given and the uncertainties are introduced, there will be generally a set of implicit variables defined through the realizations of the uncertain variable and the nonlinear equality constraints. The following remark shows the motivation and the role of the self-mapping polytope, which provides a bound on the set of implicit variables.
\begin{remark}
Given $u$ and $b$ satisfying the condition \cref{eqn:robust1_conv_restr_eq}, the self-mapping polytope, $\mathcal{P}(b)$, is an outer-approximation of all possible solutions for implicit variables under the uncertainty set $\mathcal{W}$.
\end{remark}

This gives an intuitive reason behind the convex restriction condition in the lifted domain with the parameter $b$, representing the bound on the implicit variables.
The following is an example of the envelopes that capture the uncertain variables.
\begin{example}{(Nonlinear Network Flow Problem under Uncertainty)}
Consider a special case of \cref{ex:oscillators} where the nonlinear flow models are subject to uncertainty,
\begin{displaymath}
\sigma(x_i-x_j)=w\sin(x_i-x_j),
\end{displaymath}
where the line parameter is subject to the uncertain variable $w$, bounded by $\mathcal{W}=\{w\mid w\in[w^\ell,\,w^u]\}$. Given that the basis functions are the same as \cref{ex:oscillators}, the residual function is
\begin{displaymath}
g_i(z_i,w)=w\sin{z}-w^{(0)}\cos{(z^{(0)})}z,
\end{displaymath}
for $i=1,\ldots,p$. The concave envelope that encloses the uncertainty set is then
\begin{displaymath} \begin{aligned}
g^u_{\mathcal{W},k}(z,u)&\geq\tilde{w}\sin{z_i^{(0)}}+\tilde{w}\cos{z_i^{(0)}}(z_i-z_i^{(0)})+\frac{\tilde{w}}{2}(z_i-z_i^{(0)})^2-w^{(0)}\cos{(z_i^{(0)})}z_i \\
g^\ell_{\mathcal{W},k}(z,u)&\leq\tilde{w}\sin{z_i^{(0)}}+\tilde{w}\cos{z_i^{(0)}}(z_i-z_i^{(0)})-\frac{\tilde{w}}{2}(z_i-z_i^{(0)})^2-w^{(0)}\cos{(z_i^{(0)})}z_i,
\end{aligned} \end{displaymath}
for $\tilde{w}\in\{w^u,\,w^\ell\}$. The convex restriction with the uncertain variable can be derived by \cref{thm:robust1_feasibility} using the envelope above.
\end{example}

Although this procedure is able to capture general nonlinearity and uncertainty sets, finding the concave envelope in \cref{eqn:robust_envelope} could be difficult for some of the applications. The next section discusses a special class of constraints where the robustness can be incorporated systematically.

\subsection{State-Uncertainty Separable Constraints}
In this section, we study a special case where the basis functions can be expressed by a sum of two nonlinear functions where implicit variables, $x$, and uncertain variables, $w$, are separable. Consider
\begin{equation} \begin{aligned}
f(x,u,w)&=M[\psi(x,u)+\alpha(u,w)] \\
h(x,u,w)&=L[\psi(x,u)+\beta(u,w)],
\end{aligned}\label{eqn:state_uncertainty_sepeqn}\end{equation}
where $\alpha:(\mathbf{R}^m,\mathbf{R}^r)\rightarrow \mathbf{R}^p$ and $\beta:(\mathbf{R}^m,\mathbf{R}^r)\rightarrow \mathbf{R}^p$ are vectors of continuous functions.
The functions $\alpha_i$ are linear with respect to $w$, and $L_j\beta$ are concave with respect to $w$ for all $u\in\mathbf{R}^m$. The uncertainty set $\mathcal{W}$ is a given non-empty, convex and compact set.
The derivation here closely follows \cite{Ben-Tal2015}, which provides a systematic way to construct the robust counterpart for nonlinear uncertain inequality constraints. Let us denote the convex conjugate of some function $\varphi$ as
\begin{displaymath}
\varphi^*(v)=\sup_{w\in\mathbf{dom}(\varphi)}\{v^Tw-\varphi(w)\},
\end{displaymath}
and the concave conjugate of $\varphi$ as
\begin{displaymath}
\varphi_*(v)=\inf_{w\in\mathbf{dom}(\varphi)}\{v^Tw-\varphi(w)\}.
\end{displaymath}
The indicator function of the set $\mathcal{W}$ is
\begin{displaymath}
\delta(w\mid \mathcal{W})=\begin{cases}0 & \text{if $w\in \mathcal{W}$} \\ \infty & \text{otherwise.} \end{cases}
\end{displaymath}
The support function of $\mathcal{W}$ is the conjugate of the indicator function,
\begin{equation}
\delta^*(v\mid \mathcal{W})=\sup_{w\in\mathbf{R}^r}\{v^Tw-\delta(w\mid \mathcal{W})\}=\sup_{w\in \mathcal{W}}v^Tw.
\label{eqn:supportf}\end{equation}

When the implicit variables and the uncertain variables are separable, there is a systematic way to derive the robust feasible condition using the support function and the conjugate function.

\begin{theorem}{(Robust Feasibility for State-Uncertainty Separable Constraints)}
For a given explicit variable $u$, there exists an implicit variable $x$ that satisfies constraints $f(x,u,w)=0$ and $h(x,u,w)\leq0$ for all $w\in\mathcal{W}$ if there exists
$b\in\mathbf{R}^{2q}$, $v\in\mathbf{R}^r$, and $y\in\mathbf{R}^r$ such that
\begin{subequations}\begin{align}
K^+g^u_\mathcal{P}(u,b)+K^- g^\ell_\mathcal{P}(u,b)+\xi(u,v) &\leq b \label{eqn:robust2_conv_restr_eq1} \\
L^+\psi^u_\mathcal{P}(u,b)+L^-\psi^\ell_\mathcal{P}(u,b)+\zeta(u,y) &\leq 0,
\label{eqn:robust2_conv_restr_eq2}
\end{align}\label{eqn:robust2_conv_restr_eq}\end{subequations}
where $\xi:(\mathbf{R}^m,\mathbf{R}^r)\rightarrow \mathbf{R}^{2q}$ and $\zeta:(\mathbf{R}^m,\mathbf{R}^r)\rightarrow \mathbf{R}^s$ are given by
\begin{equation}\begin{aligned}
\xi_i(u,v)&=\delta^*(v\mid \mathcal{W})-[K_i\alpha]_*(u,v) \\
\zeta_j(u,y)&=\delta^*(y\mid \mathcal{W})-[L_j\beta]_*(u,y).
\end{aligned}\label{eqn:xi_zeta_robust}\end{equation}
\label{thm:robust2_feasibility}
\end{theorem}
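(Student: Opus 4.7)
The plan is to mirror the proof of \cref{thm:CVXRS} (cf.\ also \cref{thm:robust1_feasibility}), replacing the explicit concave envelope over $\mathcal{W}$ with a Fenchel-type bound that exploits the state-uncertainty separable structure in \cref{eqn:state_uncertainty_sepeqn}. First I would recast the equality constraint in the fixed-point form of \cref{subsec:Fixed_pt_eqn}, namely $x = -(M\Lambda C)^{-1}M[g(Cx,u) + \alpha(u,w)]$, so that by \cref{lemma:iff_solvable} it suffices to verify, for every $w \in \mathcal{W}$ simultaneously, the self-mapping inequality
\begin{equation*}
\max_{x \in \mathcal{P}(b)} K_i\bigl[g(Cx,u) + \alpha(u,w)\bigr] \leq b_i, \qquad i = 1,\ldots,2q.
\end{equation*}

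Next I would take the supremum over $w$ and exploit the separable structure, which lets me split
\begin{equation*}
\sup_{w \in \mathcal{W}}\max_{x \in \mathcal{P}(b)} K_i[g(Cx,u) + \alpha(u,w)] = \max_{x \in \mathcal{P}(b)} K_i g(Cx,u) + \sup_{w \in \mathcal{W}} K_i \alpha(u,w)
\end{equation*}
as an equality, since the two summands decouple in $(x,w)$. The $x$-term is dominated by $K_i^+ g^u_\mathcal{P}(u,b) + K_i^- g^\ell_\mathcal{P}(u,b)$ via the concave-envelope and vertex-tracking steps carried over verbatim from the proof of \cref{thm:CVXRS}. For the $w$-term, linearity of $K_i \alpha(u,\cdot)$ in $w$ makes the Fenchel--Young inequality tight: for any $v \in \mathbf{R}^r$,
\begin{equation*}
\sup_{w \in \mathcal{W}} K_i \alpha(u,w) \leq \sup_w \bigl[v^T w - \delta(w \mid \mathcal{W})\bigr] + \sup_w \bigl[K_i\alpha(u,w) - v^T w\bigr] = \delta^*(v \mid \mathcal{W}) - [K_i\alpha]_*(u,v) = \xi_i(u,v),
\end{equation*}
using the support function of \cref{eqn:supportf} and the concave conjugate. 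Summing the two bounds and invoking \cref{eqn:robust2_conv_restr_eq1} delivers the required self-mapping inequality, and \cref{lemma:iff_solvable} then produces, for each $w \in \mathcal{W}$, an implicit $x(w) \in \mathcal{P}(b)$ with $f(x(w),u,w) = 0$.

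For the inequality block I would repeat the same recipe on $L\psi(Cx,u) + L\beta(u,w)$: the $x$-piece is bounded over $\mathcal{P}(b)$ by $L_j^+ \psi^u_\mathcal{P}(u,b) + L_j^- \psi^\ell_\mathcal{P}(u,b)$ as in \cref{thm:CVXRS}, and the concavity of $L_j\beta$ in $w$ (assumed in the hypothesis) licenses the analogous Fenchel bound $\sup_{w \in \mathcal{W}} L_j\beta(u,w) \leq \delta^*(y \mid \mathcal{W}) - [L_j\beta]_*(u,y) = \zeta_j(u,y)$. Combined with \cref{eqn:robust2_conv_restr_eq2}, this gives $L_j[\psi(Cx(w),u) + \beta(u,w)] \leq 0$ for every $w \in \mathcal{W}$, finishing the proof. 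The step I expect to require the most care is the Fenchel bound on the $w$-term, where I must use non-emptiness, convexity, and compactness of $\mathcal{W}$ to keep $\delta^*$ finite, and the concavity of $L_j\beta$ (rather than merely of $\beta$) so that the concave-conjugate inequality goes the right way; everything else is essentially a mechanical transcription of the nominal argument in \cref{thm:CVXRS}.
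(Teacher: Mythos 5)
Your proposal is correct and follows essentially the same route as the paper: split the separable state and uncertainty terms, bound the state part over $\mathcal{P}(b)$ with the concave envelopes as in \cref{thm:CVXRS}, bound the uncertainty part by $\delta^*(\cdot\mid\mathcal{W})$ minus the concave conjugate via Fenchel duality, and invoke \cref{lemma:iff_solvable}. The only (harmless) difference is that you use the one-sided Fenchel--Young inequality valid for any $v$, whereas the paper invokes the full duality equality with a minimum over $v$; for the sufficiency direction these are interchangeable.
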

\begin{proof}
From the definition of indicator functions and using the Fenchel duality \cite{Bertsekas1999},
\begin{displaymath} \begin{aligned}
\max_{w\in\mathcal{W}}K_i\alpha(u,w)&=\max_{w\in\mathbf{R}^r}\left\{K_i\alpha(u,w)-\delta(w\mid\mathcal{W})\right\} \\
&=\min_{v\in\mathbf{R}^r}\left\{\delta^*(v\mid\mathcal{W})-[K_i\alpha]_*(u,v)\right\}.
\end{aligned} \end{displaymath}
Then using the expression above,
\begin{displaymath} \begin{aligned}
    \max_{w\in\mathcal{W}}\max_{x\in\mathcal{P}(b)} \left[K_ig(Cx,u)+K_i\alpha(u,w)\right] &\leq \max_{x\in\mathcal{P}(b)} K_ig(Cx,u)+\max_{w\in\mathcal{W}}K_i\alpha(u,w) \\    
    &\leq K_i^+g^u_\mathcal{P}(u,b)+K_i^- g^\ell_\mathcal{P}(u,b)+\xi_i(v,u)\leq b_i,
\end{aligned} \end{displaymath}
for some $v\in\mathbf{R}^r$. Therefore, the existence of $v\in\mathbf{R}^r$ guarantees the existence of an implicit variable under all realizations of $w\in\mathcal{W}$. Similarly, for $j=1,\ldots,s$,
\begin{displaymath}\begin{aligned}
    \sup_{w\in\mathcal{W}}\max_{x\in\mathcal{P}(b)} [L_j\psi(Cx,u)+L_j\beta(Cx,u)]&\leq L_j^+ \psi^u_\mathcal{P}(u,b)+L_j^-\psi^\ell_\mathcal{P}(u,b)+\zeta_j(u,y)\leq 0,
\end{aligned}\end{displaymath}
therefore, there exists an implicit variable satisfying $f(x,u,w)=0$ and $h(x,u,w)\leq0$ for all $w\in\mathcal{W}$. 
\end{proof}

There is a table of closed-form expressions for $\xi$ and $\zeta$ in \cref{eqn:xi_zeta_robust} depending on the function and the uncertainty set. We refer readers to \cite{Ben-Tal2015} for those cases, and we will show only one special case where those functions are linear with respect to $w$.

\subsubsection{Additive Uncertainty Constraints}
We consider again a special case of state-uncertainty separable constraints in \cref{eqn:state_uncertainty_sepeqn} where $\alpha$ and $\beta$ are linear functions of $w$ such that
\begin{equation} \begin{aligned}
f(x,u,w)&=M[\psi(x,u)+B w] \\
h(x,u,w)&=L[\psi(x,u)+D w],
\end{aligned}\end{equation}
where $B\in\mathbf{R}^{n\times r}$ and $D\in\mathbf{R}^{s\times r}$ are constant matrices. In addition, the uncertainty sets considered here are norm-bounded uncertainty sets,
\begin{equation}
\begin{aligned}
\mathcal{W}^Q(\gamma)&=\left\{w\mid \lVert w-w^{(0)} \rVert_2\leq \gamma\right\} \\
\mathcal{W}^B(\gamma)&=\left\{w\mid \lVert w-w^{(0)} \rVert_\infty\leq \gamma \right\},
\end{aligned}
\end{equation}
where $\gamma\in\mathbf{R}$ represents the margin. As $\gamma\rightarrow 0$, the uncertainty set vanishes, and the analysis on the nominal constraints applies. Moreover, there is the following manipulation to convert any general nonlinear uncertainty into additive uncertainty.

\begin{remark}
Any nonlinear constraint, $f(x,u,w)=0$ and $h(x,u,w)\leq0$, can be replaced with the additive uncertainty representation, $\tilde{f}(\tilde{x},u,w)=0$ and $\tilde{h}(\tilde{x},u,w)\leq0$. The functions $\tilde{f}:(\mathbf{R}^{n+r},\mathbf{R}^m,\mathbf{R}^r)\rightarrow \mathbf{R}^{n+r}$ and $\tilde{h}:(\mathbf{R}^{n+r},\mathbf{R}^m,\mathbf{R}^r)\rightarrow \mathbf{R}^s$ are
\begin{equation}
\tilde{f}(\tilde{x},u,w)=\begin{bmatrix} f(x,u,x_w) \\ x_w-w \end{bmatrix},\ \tilde{h}(\tilde{x},u,w)=h(x,u,x_w)
\end{equation}
where $x_w\in\mathbf{R}^r$ and $\tilde{x}=\begin{bmatrix}x^T & x_w^T\end{bmatrix}^T$. The replaced condition is equivalent to the original constraint, and the uncertainty $w$ enters the nonlinear equation as an additive term.
\end{remark}

When the system of nonlinear equations can be represented with the additive uncertainty, the following theorem provides a sufficient condition for robust feasibility.

\begin{corollary}{(Robust Feasibility for Additive Uncertainty)}
Suppose that the uncertainty set is given by a norm-bounded set, $\mathcal{W}(\gamma)=\{w\mid\lVert w\rVert\leq\gamma\}$.
For a given explicit variable $u$, there exists an implicit variable $x$ that satisfies $f(x,u,w)=0$ and $h(x,u,w)\leq0$ for all $w\in\mathcal{W}(\gamma)$ if there exists
$b\in\mathbf{R}^{2q}$ such that
\begin{subequations}\begin{align}
K^+g^u_\mathcal{P}(u,b)+K^- g^\ell_\mathcal{P}(u,b)+\xi(\gamma) &\leq b \\
L^+\psi^u_\mathcal{P}(u,b)+L^-\psi^\ell_\mathcal{P}(u,b)+\zeta(\gamma) &\leq 0,
\end{align}\end{subequations}
where for $i=1,\ldots,n$ and $j=1,\ldots,s$, $\xi_i$ and $\zeta_j$ are given by the following table depending on the type of uncertainty set.
\vskip0.7em
\begin{center} \begin{tabular}{|c|c|c|} \hline
 & $\mathcal{W}^Q(\gamma)$ & $\mathcal{W}^B(\gamma)$ \\ \hline
$ \xi_i(\gamma)$ & $K_iBw^{(0)}+\gamma\lVert K_iB\rVert_2$ & $K_iBw^{(0)}+\gamma\lVert K_iB\rVert_\infty$ \\ \hline
 $\zeta_j(\gamma)$ & $L_jDw^{(0)}+\gamma\lVert L_jD\rVert_2$ & $L_jDw^{(0)}+\gamma\lVert L_jD\rVert_\infty$ \\ 
\hline\end{tabular}\end{center}
\vskip0.7em
\label{corollary:addtive_uncertainty}\end{corollary}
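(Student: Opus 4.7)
The plan is to derive this corollary as a direct specialization of \cref{thm:robust2_feasibility}, by taking $\alpha(u,w)=Bw$ and $\beta(u,w)=Dw$ and then collapsing the inner optimizations over the dual variables $v$ and $y$ into closed-form constants depending only on $\gamma$. The whole derivation rests on two ingredients: (a) the Fenchel concave conjugate of a linear function has a trivial closed form, and (b) the support function of a translated norm ball is a translated dual norm.

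First, I would compute the concave conjugate required by \cref{eqn:xi_zeta_robust}. Since $K_i\alpha(u,w)=K_iBw$ is linear in $w$ and independent of $u$,
\begin{displaymath}
[K_i\alpha]_*(u,v)=\inf_{w\in\mathbf{R}^r}\{v^Tw-K_iBw\}=\begin{cases}0 & \text{if } v=(K_iB)^T,\\ -\infty & \text{otherwise},\end{cases}
\end{displaymath}
so the only value of $v$ that makes $\xi_i(u,v)$ finite is $v=(K_iB)^T$, and with this choice $\xi_i$ reduces to $\delta^*((K_iB)^T\mid\mathcal{W})$. An identical reduction applies to $\zeta_j$, forcing $y=(L_jD)^T$ and giving $\zeta_j=\delta^*((L_jD)^T\mid\mathcal{W})$. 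This step justifies the elimination of the dual variables $v,y$ from the statement: the existential quantifiers in \cref{thm:robust2_feasibility} are satisfied for this problem if and only if the corresponding finite values of $\xi_i,\zeta_j$ satisfy the inequalities.

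Second, I would evaluate these support functions for the two norm-bounded sets. For any centered norm ball one has $\delta^*(v\mid\{w:\|w-w^{(0)}\|\leq\gamma\})=v^Tw^{(0)}+\gamma\|v\|_*$, where $\|\cdot\|_*$ is the dual norm. Applying this with $v=(K_iB)^T$ for $\mathcal{W}^Q(\gamma)$ (self-dual $\ell_2$) and $\mathcal{W}^B(\gamma)$ (dual of $\ell_\infty$), and analogously with $(L_jD)^T$, yields the four entries of the table. Substituting these constants into \cref{eqn:robust2_conv_restr_eq} recovers the claimed inequalities.

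The only subtle point is the justification of swapping the existential quantifier over $v$ with the choice $v=(K_iB)^T$: because the concave conjugate is $-\infty$ outside that singleton, any candidate $v$ yielding a finite $\xi_i$ must equal $(K_iB)^T$, so no generality is lost. Once this observation is in place, the remainder is a direct bookkeeping exercise applying \cref{thm:robust2_feasibility}. I do not anticipate a real obstacle; the main care is in keeping the linear algebra tidy (row versus column vectors for $K_i$ and $L_j$) and in correctly identifying dual norms so that the $\ell_2$ and $\ell_\infty$ cases are stated with their appropriate dual-norm constants.
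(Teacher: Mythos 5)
Your proposal is correct and follows essentially the same route as the paper's proof: specialize \cref{thm:robust2_feasibility} with $\alpha(u,w)=Bw$ and $\beta(u,w)=Dw$, observe that the concave conjugate of a linear function vanishes at $v=(K_iB)^T$ (resp.\ $y=(L_jD)^T$), and evaluate the support function of the norm ball via the dual norm; your extra remark that the conjugate is $-\infty$ off that singleton, so the existential quantifier over $v$ collapses without loss of generality, is a cleaner justification than the paper gives. One caveat: by your own (correct) dual-norm principle, the support function of the $\ell_\infty$ ball $\mathcal{W}^B(\gamma)$ is $v^Tw^{(0)}+\gamma\lVert v\rVert_1$, so the box-uncertainty column should read $\gamma\lVert K_iB\rVert_1$ and $\gamma\lVert L_jD\rVert_1$ rather than the stated $\ell_\infty$ norms --- a discrepancy already present in the paper's table, which your write-up inherits by asserting that the dual-norm computation ``yields the four entries of the table.''
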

\begin{proof}
This is a special case of \cref{thm:robust2_feasibility} with $\alpha(u,w)=Bw$ and $\beta(u,w)=Bw$, so Equation \cref{eqn:xi_zeta_robust} can be used to compute $\xi$ and $\zeta$.  Since $\alpha$ and $\beta$ are linear functions with respect to $w$, their concave conjugate functions are $[K_i\alpha]_*(u,v)=0$ with $v=(K_iB)^T$, and $[L_j\beta]_*(u,y)=0$ with $y=(L_jD)^T$. Substituting $v$ and $y$ to the support function,
\begin{displaymath}\begin{aligned}
\xi_i(\gamma)&=\delta^*(v\mid \mathcal{W}^Q(\gamma))\mid_{v=(K_iB)^T}=K_iBw^{(0)}+\gamma\lVert K_iB\rVert_2 \\
\zeta_j(\gamma)&=\delta^*(y\mid \mathcal{W}^Q(\gamma))\mid_{y=(L_jD)^T}=L_jDw^{(0)}+\gamma\lVert L_jD\rVert_2.
\end{aligned}\end{displaymath}
Similarly, the margins $\xi$ and $\zeta$ can be derived for the uncertainty set $\mathcal{W}^B(\gamma)$.
\end{proof}

Here the size of the uncertainty set is parametrized by $\gamma$ where the larger the $\gamma$, the more robust the system is against the uncertain variable. 
The robustness of a solution $(x^{(0)},u^{(0)})$ is often defined as how much uncertainty a solution can tolerate without violating the constraints.
With convex restriction and additive uncertainty constraints, the lower bound on the margin can be computed by solving a convex optimization problem.

\begin{corollary}{(Robustness Margin)}
Suppose that $\gamma\in\mathbf{R}$ is given by solving the following optimization problem,
\begin{equation}\begin{aligned}
	\underset{\gamma,b}{\text{maximize}} \hskip 1em & \gamma \\
	\text{subject to} \hskip 1em & K^+ g^u_\mathcal{P}(u^{(0)},b)+K^- g^\ell_\mathcal{P}(u^{(0)},b)+\xi(\gamma)\leq b \\
	& L^+ \psi^u_\mathcal{P}(u^{(0)},b)+L^- \psi^\ell_\mathcal{P}(u^{(0)},b)+\zeta(\gamma) \leq 0,
\end{aligned}\label{eqn:robOPT_gamma}\end{equation}
where $\xi$ and $\zeta$ are linear functions of $\gamma$ given in \cref{corollary:addtive_uncertainty}. Then the explicit variable $u^{(0)}$ has a corresponding implicit variable $x$ satisfying $f(x,u,w)=0$ and $h(x,u,w)\leq0$ for all realizations of the uncertainty set $\mathcal{W}(\gamma)=\{w\mid \lVert w-w^{(0)}\rVert\leq\gamma\}$.
\label{thm:robust3_feasibility}
\end{corollary}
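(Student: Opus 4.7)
The plan is to observe that \cref{corollary:addtive_uncertainty} already supplies the desired robust feasibility certificate as soon as one exhibits a $b$ satisfying the restriction conditions augmented by the margin terms $\xi(\gamma)$ and $\zeta(\gamma)$. The optimization problem \cref{eqn:robOPT_gamma} is nothing other than the maximization of $\gamma$ over exactly those conditions, with the explicit variable held fixed at $u^{(0)}$. Hence any feasible point $(\gamma^*,b^*)$ of \cref{eqn:robOPT_gamma} feeds directly into \cref{corollary:addtive_uncertainty} and produces an implicit variable $x$ with $f(x,u^{(0)},w)=0$ and $h(x,u^{(0)},w)\leq 0$ for all $w\in\mathcal{W}(\gamma^*)$.

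First, I would verify that the optimization problem is well-posed and convex. Since each $g_k^u(\cdot,u^{(0)})$ is convex and $g_k^\ell(\cdot,u^{(0)})$ is concave, the vertex-based representation in \cref{eqn:g_uell} shows that $g^u_\mathcal{P}(u^{(0)},b)$ is convex in $b$ and $g^\ell_\mathcal{P}(u^{(0)},b)$ is concave in $b$, and similarly for $\psi^u_\mathcal{P},\psi^\ell_\mathcal{P}$. Combined with the sign split $K^+\geq 0$, $K^-\leq 0$ (and analogously for $L$), the left-hand sides of the constraints are convex in $b$. The margin terms $\xi(\gamma)$ and $\zeta(\gamma)$ from \cref{corollary:addtive_uncertainty} are affine in $\gamma$, so the constraints are jointly convex in $(\gamma,b)$ and the objective is linear.

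Second, I would establish that the feasible set is non-empty. Setting $b^{(0)}=Ax^{(0)}$ collapses $\mathcal{P}(b^{(0)})$ to $\{x^{(0)}\}$, so the tightness properties in \cref{cond:env} imply that the $\gamma$-independent parts of the constraints reduce to the nominal identities $Kg(z^{(0)},u^{(0)})=b^{(0)}$ and $L\psi(z^{(0)},u^{(0)})\leq 0$. Taking $\gamma=0$ then satisfies both inequalities strictly or with equality, so $(\gamma,b)=(0,b^{(0)})$ is feasible; a strict feasibility argument analogous to \cref{lemma:non_empty} yields a strictly positive optimal value whenever the nominal constraints hold strictly.

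Third, for any feasible $(\gamma^*,b^*)$ of \cref{eqn:robOPT_gamma}, the tuple $(u^{(0)},b^*,\gamma^*)$ satisfies exactly the hypotheses of \cref{corollary:addtive_uncertainty}, which is the substantive theorem; applying that corollary yields the claimed implicit variable and its feasibility under all $w\in\mathcal{W}(\gamma^*)$. Specializing to the optimizer finishes the proof. The only subtlety, and the closest thing to an obstacle, is confirming that the specific $\xi(\gamma)$ and $\zeta(\gamma)$ used in \cref{eqn:robOPT_gamma} coincide with those appearing in \cref{corollary:addtive_uncertainty}, which is a direct bookkeeping check using the closed-form expressions from the table for either $\mathcal{W}^Q(\gamma)$ or $\mathcal{W}^B(\gamma)$.
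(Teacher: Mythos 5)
Your proposal is correct and matches the paper's (implicit) reasoning: the paper states this corollary without proof precisely because, as you observe, any feasible point $(\gamma,b)$ of \cref{eqn:robOPT_gamma} satisfies the hypotheses of \cref{corollary:addtive_uncertainty} with $u=u^{(0)}$, which immediately yields the robust feasibility claim. The additional convexity and non-emptiness checks you include are sound supporting remarks but not needed for the statement itself.
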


In addition to finding the robustness margin of a solution, the explicit variable $u^{(0)}$ can be iteratively updated to find the optimal solution given the nonlinear equality and inequality constraints.

\section{Sequential Convex Restriction}\label{sec:scrs}
In this section, we develop the algorithm to solve the robust optimization in \cref{eqn:robustOPT_orig},

\begin{displaymath}\begin{aligned}
		\underset{u}{\text{minimize}} \hskip 1em & f_0(u) \\
		\text{subject to} \hskip 1em & \forall w\in\mathcal{W},\ \exists\, x\in\mathbf{R}^n,\ f(x,u,w)= 0, \ h(x,u,w)\leq 0.
\end{aligned}\end{displaymath}

The non-convex constraints in this problem can be restricted to convex constraints by the conditions provided in \cref{thm:CVXRS} for the nominal constraints and \cref{thm:robust1_feasibility} for constraints including uncertainty. Special cases such as state-uncertainty separable constraints or additive uncertainty constraints can use the convex restrictions in \cref{thm:robust2_feasibility} and \cref{corollary:addtive_uncertainty}, respectively.
The problem can be solved by iterating between (a) solving the optimization with convex restriction, and (b) setting the solution as the new nominal point for constructing the convex restriction. The algorithm described here is named sequential convex restriction, and the procedure is described in \cref{alg:SCRS} with some termination thresholds $\varepsilon_1,\varepsilon_2,\varepsilon_3>0$.

\begin{algorithm}[!htbp]
\begin{algorithmic}
\STATE \textit{Initialization}: $u^{(0)}$, $x^{(0)}$, and $k=0$
\WHILE {$\lVert u^{(k+1)}-u^{(k)}\rVert_2>\varepsilon_1$ or $\lVert f_0(u^{(k+1)})-f_0(u^{(k)})\rVert_2>\varepsilon_2$}
\STATE $K=\begin{bmatrix}-I & I\end{bmatrix}^TC(M\Lambda C)^{-1}M$
\STATE $u^{(k+1)}=\argmin_{u\in\mathcal{U}^\textrm{cvxrs}_{\mathcal{W},(k)}} f_0(u)$
\STATE $x^{(k+1)}=x^{(k)}$
\WHILE{$\lVert f(x^{(k+1)},u^{(k+1)},w^{(0)})\rVert_2>\varepsilon_3$}
\STATE $x^{(k+1)}=-(M\Lambda C)^{-1}Mg(x^{(k+1)},u^{(k+1)},w^{(0)})$
\ENDWHILE
\STATE $k:=k+1$
\ENDWHILE
\end{algorithmic}
\caption{Sequential Convex Restriction}
\label{alg:SCRS}
\end{algorithm}

There are three computationally notable steps, which are computing the inverse of the Jacobian to compute $K$, solving the convex optimization problem with convex restriction, and retrieving the nominal implicit variable. The retrieval of the implicit variable leverages \cref{remark:retrieval_implicit} in the proposed algorithm, but this step can be replaced by other procedures such as Newton's method or the Gauss-Seidel method.

\subsection{Analysis on the Subproblems}
Sequential Convex Restriction solves the following convex optimization problem as the subproblems of the algorithm, 
\begin{equation}
	\begin{aligned}
		\underset{u,b}{\text{minimize}} \hskip 1em & f_0(u) \\
		\text{subject to} \hskip 1em & K^+ g^u_\mathcal{PW}(u,b)+K^- g^\ell_\mathcal{PW}(u,b)\leq b \\
		& L^+ \psi^u_\mathcal{PW}(u,b)+L^- \psi^\ell_\mathcal{PW}(u,b) \leq 0.
	\end{aligned}
	\label{eqn:OPT_cvxrs}
\end{equation}

The key feature of the convex restriction is that the non-convex constraint can be replaced by a convex approximation that guarantees a feasible solution. 
Moreover, the containment of the nominal point from \cref{lemma:non_empty} ensures that the optimal value is improved at every iteration.

\begin{corollary}{(Bounds on the Optimal Cost)}
Suppose that $u^{(k+1)}$ denotes the solution of the problem in \cref{eqn:OPT_cvxrs}:
\begin{equation}
u^{(k+1)}=\argmin_{u\in\mathcal{U}^\textrm{cvxrs}_{\mathcal{W},(k)}} f_0(u).
\end{equation}
The optimal value of the problem is bounded by
\begin{equation}
f_0(u^\textrm{opt})\leq f_0(u^{(k+1)})\leq f_0(u^{(k)}),
\end{equation}
where $u^\textrm{opt}$ is the global optimal solution of the problem in \cref{eqn:robustOPT_orig}.
\label{corollary:bound_opt}
\end{corollary}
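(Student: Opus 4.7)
The plan is to read the corollary as a straightforward chaining of two already-established facts: the inclusion $\mathcal{U}^\textrm{cvxrs}_{\mathcal{W},(k)}\subseteq\mathcal{U}_\mathcal{W}$ from \cref{thm:robust1_feasibility}, which yields the lower bound, and the nominal-point containment from \cref{lemma:non_empty}, which yields the monotonic upper bound. So the proof will split cleanly into two short arguments, neither of which requires new machinery.

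For the left inequality $f_0(u^\textrm{opt})\leq f_0(u^{(k+1)})$, I would first observe that by \cref{thm:robust1_feasibility} every $u\in\mathcal{U}^\textrm{cvxrs}_{\mathcal{W},(k)}$ admits an implicit variable $x$ satisfying $f(x,u,w)=0$ and $h(x,u,w)\leq 0$ for all $w\in\mathcal{W}$; hence $\mathcal{U}^\textrm{cvxrs}_{\mathcal{W},(k)}\subseteq\mathcal{U}_\mathcal{W}$. In particular $u^{(k+1)}\in\mathcal{U}_\mathcal{W}$, and since $u^\textrm{opt}$ minimizes $f_0$ over the larger set $\mathcal{U}_\mathcal{W}$, we have $f_0(u^\textrm{opt})\leq f_0(u^{(k+1)})$.

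For the right inequality $f_0(u^{(k+1)})\leq f_0(u^{(k)})$, I would apply \cref{lemma:non_empty} to the $(k{+}1)$-st subproblem, which is constructed around the nominal point $(x^{(k)},u^{(k)})$. That lemma establishes that the nominal point is always contained in its own convex restriction, so $u^{(k)}\in\mathcal{U}^\textrm{cvxrs}_{\mathcal{W},(k)}$. Since $u^{(k+1)}$ is by definition the minimizer of $f_0$ over this feasible set, and $u^{(k)}$ is a feasible candidate in it, the optimal value cannot exceed $f_0(u^{(k)})$. Chaining the two bounds yields the claim.

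The main subtlety worth a sentence is that \cref{lemma:non_empty} was stated for the nominal convex restriction and needs to be invoked in its robust form; however, inspection of the proof shows that the argument carries over verbatim once one replaces $g^u_\mathcal{P},g^\ell_\mathcal{P},\psi^u_\mathcal{P},\psi^\ell_\mathcal{P}$ by their $\mathcal{PW}$ analogues, since these envelopes are still tight at the nominal point in $(u,b)$ and continuous. Beyond this bookkeeping, there is no real obstacle: the corollary is essentially a consistency statement about the algorithm, not a new analytical result, and the bulk of the work was already done in \cref{thm:robust1_feasibility} and \cref{lemma:non_empty}.
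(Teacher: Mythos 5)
Your proposal is correct and follows essentially the same route as the paper: the lower bound from the fact that the convex restriction is a subset of the true robust feasible set (the paper compresses this to ``the definition of the global optimal value''), and the upper bound from \cref{lemma:non_empty} giving $u^{(k)}\in\mathcal{U}^\textrm{cvxrs}_{\mathcal{W},(k)}$ so that the minimizer can only do at least as well. If anything you are more explicit than the paper, which simply cites \cref{lemma:non_empty} without comment. One caveat on your ``subtlety'' paragraph: your assertion that the $\mathcal{PW}$ envelopes ``are still tight at the nominal point'' is at odds with the paper's own remark in Section~\ref{sec:rscrs} that \cref{cond:env}~(ii) and (iii) cannot be satisfied for basis functions depending on the uncertain variable, so the degenerate-polytope argument of \cref{lemma:non_empty} does not carry over verbatim to the robust restriction; this is a gap the paper's proof shares (and glosses over), not one you introduced, but your proposed fix does not actually close it.
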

\begin{proof}
The lower bound comes from the definition of the global optimal value. From \cref{lemma:non_empty}, the convex restriction always contains the nominal point, $u^{(k)}\in\mathcal{U}^\textrm{cvxrs}_{\mathcal{W},(k)}$. Therefore, $\min_{u\in\mathcal{U}^\textrm{cvxrs}_{\mathcal{W},(k)}} f_0(u)\leq f_0(u^{(k)})$.
\end{proof}

\subsection{Sequential Convex Restriction for Nominal Constraints}
In this section, we provide the analysis of the algorithm for solving the nominal problem,
\begin{equation}
\underset{u,x}{\text{minimize}} \hskip1em f_0(u), \hskip1em \text{subject to} \hskip1em f(x,u)= 0, \ h(x,u)\leq 0.
\label{eqn:nominalOPT}
\end{equation}

Sequential convex restriction (SCRS) for nominal constraints belongs to the family of Sequential Convex Optimization, which is a local search method that iteratively solves convex approximations of the original problem. In particular, the related classical algorithms are the Sequential Quadratic Programming (SQP) and trust region methods \cite{Boggs1995,conn2000trust,Gould2005,nocedal2006numerical}. 
While these methods showed success in practice for solving a large optimization with equality constraints, some of the possible shortcomings were (i) the linearized constraints may be inconsistent, (ii) the solution may be infeasible, and (iii) the iteration may diverge. These shortcomings could be overcome by using extended methods such as Inexact SQP \cite{Curtis2014,Byrd2010,Byrd2008}.
Sequential convex restriction provides potentially a more elegant way to handle the shortcomings of SQP.
An alternative view of SCRS is that the self-mapping set $\mathcal{P}(b)$ can be interpreted as a trust region, and the lifted formulation allows us to co-optimize the decision variables and the trust region cast as a single convex optimization problem.
Next, we show the convergence result for the algorithm, which states that the converged point will satisfy the KKT condition, which is a necessary condition for optimality for non-convex problems.

\begin{corollary}{(Convergence of SCRS)}
Suppose the explicit variable $u^*$ is the output of \cref{alg:SCRS} such that
\begin{equation}
u^*=\lim_{k\rightarrow\infty}\argmin_{u^{(k+1)}\in\mathcal{U}^\textrm{cvxrs}_{\mathcal{W},(k)}} f_0(u).
\end{equation}
Then, there exists a corresponding implicit variable $x^*$ such that $(x^*,u^*)$ is feasible and
\begin{itemize}
\item $\nabla_x f(x^*,u^*)\mid_{x=x^*}$ is singular, or 
\item $(x^*,u^*)$ satisfies the KKT condition of the original problem in \cref{eqn:nominalOPT}.
\end{itemize}
\label{corol:KKT}\end{corollary}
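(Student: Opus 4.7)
The plan is to first show that any limit point $u^*$ of \cref{alg:SCRS} admits a matching implicit variable so that $(x^*,u^*)$ is feasible, and then to translate the KKT conditions of the limiting convex subproblem into the KKT conditions of \cref{eqn:nominalOPT}. First I would establish feasibility at the limit: the inner while-loop of \cref{alg:SCRS} is exactly the fixed-point iteration of \cref{remark:retrieval}, and since $u^{(k+1)}\in\mathcal{U}^\textrm{cvxrs}_{(k)}$, \cref{thm:CVXRS} keeps the iterates in the compact polytope $\mathcal{P}(b^{(k+1)})$ and on convergence yields $x^{(k+1)}$ satisfying $f(x^{(k+1)},u^{(k+1)})=0$ and $h(x^{(k+1)},u^{(k+1)})\leq 0$. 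Passing to the limit and using continuity, $(x^*,u^*)$ is feasible. Next comes a case split: if $\nabla_x f(x^*,u^*)$ is singular, the first bullet holds; otherwise $M\Lambda^* C$ is nonsingular for $\Lambda^*=\nabla_z\psi(z^*,u^*)$, so the convex restriction around $(x^*,u^*)$ is well-defined. The limit $u^*$ must then minimize $f_0$ over $\mathcal{U}^\textrm{cvxrs}_{(*)}$, because otherwise \cref{corollary:bound_opt} would produce a strictly better iterate, contradicting the convergence of the monotone sequence $f_0(u^{(k)})$. Standard convex KKT theory then supplies multipliers $\lambda\in\mathbf{R}^{2q}$ and $\mu\in\mathbf{R}^s$ with $\mu\geq 0$ for the lifted subproblem in variables $(u,b)$.

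The final and most delicate step is to translate these lifted multipliers back into multipliers for the original KKT system. Two observations make this possible. At $b^*=Ax^*$ the polytope $\mathcal{P}(b^*)$ collapses to $\{x^*\}$, so all $2q$ restriction constraints in \cref{eqn:conv_restr_eq1} are active and $Kg(x^*,u^*)=b^*$. By \cref{cond:env} items (ii) and (iii), the convex and concave envelopes coincide with $g$ and $\psi$ and share their first derivatives at the nominal point, so the gradients of the envelope expressions evaluated at $(u^*,b^*)$ reduce to $K\nabla_u g(x^*,u^*)=-A(M\Lambda^* C)^{-1}\nabla_u f(x^*,u^*)$ for the equality block and $L\nabla_u h(x^*,u^*)$ for the inequality block. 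Defining the original-problem multiplier $\bar\lambda=-(M\Lambda^* C)^{-T}A^T\lambda$ and keeping $\mu$ for $h$, the $u$-stationarity equation of the subproblem rearranges into
\begin{displaymath}
\nabla_u f_0(u^*)+\bar\lambda^T\nabla_u f(x^*,u^*)+\mu^T\nabla_u h(x^*,u^*)=0,
\end{displaymath}
while the $b$-stationarity equation, which encodes how perturbing the polytope's support changes the constraint values, reduces via the chain rule through $C$ to $\bar\lambda^T\nabla_x f(x^*,u^*)+\mu^T\nabla_x h(x^*,u^*)=0$. Primal feasibility, dual feasibility, and complementary slackness are inherited directly because an active restriction inequality at the collapsed polytope coincides with an active inequality constraint of the original problem.

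The hard part will be making this last translation rigorous. The envelope maximum $g^u_\mathcal{P}(u,b)$ is a pointwise maximum over the vertices of $\mathcal{P}(b)$, hence only piecewise smooth in $b$; at $b^*$ all candidate vertices coincide at $x^*$ and several active faces overlap, so some care is needed with subgradients when the lifted multiplier $\lambda$ is split across the coincident vertex copies in order to define $\bar\lambda$. In the smooth case the subdifferential collapses to a singleton in the direction of $\nabla_z g(x^*,u^*)$ and the argument goes through cleanly, while the non-smooth case requires the sub/supergradient choice of $\Lambda$ from \cref{subsec:Fixed_pt_eqn} together with an analogous argument using generalized gradients.
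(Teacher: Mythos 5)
Your proposal follows essentially the same route as the paper's proof: monotone decrease of $f_0(u^{(k)})$ via \cref{corollary:bound_opt}, the limit point being optimal for its own convex restriction with $b^*=Ax^*$ collapsing $\mathcal{P}(b^*)$ to $\{x^*\}$, the case split on singularity of $\nabla_x f(x^*,u^*)$, the tightness of the envelopes and of their gradients at the nominal point from \cref{cond:env}(ii)--(iii), and a linear change of multipliers (your $\bar\lambda=-(M\Lambda^* C)^{-T}A^T\lambda$ is the paper's $\nu=C^T(\lambda_2^*-\lambda_1^*)$ up to where the factor $(M\Lambda C)^{-1}$ is absorbed). The only substantive difference is that you explicitly flag the piecewise smoothness of $g^u_\mathcal{P}(u,b)$ in $b$ at the collapsed polytope and the need for a compactness argument to extract $x^*$, points the paper's proof passes over silently.
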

\begin{proof}
The algorithm yields a sequence of explicit variables $\{u^{(k)}\}$ that satisfies $f_0(u^{(k+1)})\leq f_0(u^{(k)})$ from \cref{corollary:bound_opt}. Moreover, since the sequence is bounded below by the global optimal solution $f_0^*$, the sequence converges to a finite value from the Monotone Convergence Theorem. Suppose the converged solution is denoted by $(x^*,u^*)$, which satisfies
\begin{equation}
u^*=\argmin_{u\in\mathcal{U}^\textrm{cvxrs}_*} f_0(u),
\label{eqn:u0_cvxrsopt}
\end{equation}
where $\mathcal{U}^\textrm{cvxrs}_*$ is the convex restriction with $(x^*,u^*)$ as the nominal point.
Without loss of generality, $b^*=\begin{bmatrix} (Cx^*)^T & -(Cx^*)^T \end{bmatrix}^T$ is always a feasible solution for $\mathcal{U}^\textrm{cvxrs}_*$ and is an optimal solution for the above problem. Suppose $\nabla_x f(x^*,u^*)\mid_{x=x^*}$ is non-singular, then $M\Lambda C$ is invertible. Let $\Gamma=-C(M\Lambda C)^{-1}M$, then the following KKT condition is a necessary and sufficient condition for optimality of the problem \cref{eqn:u0_cvxrsopt},
\begin{equation*}\begin{aligned}
\Gamma^+g^u_\mathcal{P}(u^*,b^*)+\Gamma^- g^\ell_\mathcal{P}(u^*,b^*) &\leq z^*, \\
-\Gamma^-g^u_\mathcal{P}(u^*,b^*)-\Gamma^+ g^\ell_\mathcal{P}(u^*,b^*) &\leq -z^*, \\
L^+\psi^u_\mathcal{P}(u^*,b^*)+L^-\psi^\ell_\mathcal{P}(u^*,b^*) &\leq 0, \\
\lambda^*_1\geq 0,\ \lambda^*_2\geq 0,\ \lambda^*_3&\geq 0, \\
\lambda^*_{1,i}\Gamma_i^+g^u_\mathcal{P}(u^*,b^*)+\lambda^*_{1,i}\Gamma_i^- g^\ell_\mathcal{P}(u^*,b^*) &= \lambda^*_{1,i}z_i^*,\ i=1,...,q, \\
-\lambda^*_{2,i}\Gamma_i^-g^u_\mathcal{P}(u^*,b^*)-\lambda^*_{2,i}\Gamma_i^+ g^\ell_\mathcal{P}(u^*,b^*) &= -\lambda^*_{2,i}z_i^*,\ i=1,...,q, \\
\lambda^*_{3,i}L_i^+\psi^u_\mathcal{P}(u^*,b^*)+\lambda^*_{3,i}L_i^-\psi^\ell_\mathcal{P}(u^*,b^*) &= 0,\ i=1,...,s, \\
\nabla f_0(u^*)+\sum_{i=1}^q\lambda_{1,i}^*\left\{\Gamma_i^+\nabla g^u_\mathcal{P}(u^*,b^*)+\Gamma_i^- \nabla g^\ell_\mathcal{P}(u^*,b^*)-\nabla z_i(z^*)\right\}\hskip -10em&\\
+\sum_{i=1}^q\lambda_{2,i}^*\left\{-\Gamma_i^-\nabla g^u_\mathcal{P}(u^*,b^*)-\Gamma_i^+ \nabla g^\ell_\mathcal{P}(u^*,b^*)+\nabla z_i(z^*)\right\}\hskip -10em&\\
+\sum_{i=1}^q\lambda_{3,i}^*\left\{L_i^+\nabla \psi^u_\mathcal{P}(u^*,b^*)+L_i^-\nabla \psi^\ell_\mathcal{P}(u^*,b^*)\right\}=0\hskip -10em&
\end{aligned}\end{equation*}
Since $(x^*,u^*)$ is the nominal point and satisfies \cref{cond:env},
\begin{displaymath}\begin{aligned}
g^\ell_\mathcal{P}(u^*,b^*)&=g(z^*,u^*)=g^u_\mathcal{P}(u^*,b^*), \\
\nabla g^\ell_\mathcal{P}(u^*,b^*)&=\nabla g(u^*,b^*)=\nabla g^u_\mathcal{P}(u^*,b^*).
\end{aligned}\end{displaymath}
Substitute the above equation and $\nu_i=\sum_{j=1}^q(\lambda_{2,j}^*-\lambda_{1,j}^*)C_{ji}$ for $i=1,...,n$ to the KKT condition of the problem \cref{eqn:u0_cvxrsopt}, then
\begin{equation*}\begin{aligned}
-(M\Lambda C)^{-1}Mg(x^*,u^*) = x^*, \
L\psi(x^*,u^*)&\leq 0,\\ 
\lambda^*_3\geq 0, \
\lambda^*_{3,i}L_i\psi(x^*,u^*) = 0,\ i=1,...,s,& \\
\nabla f_0(u^*)+\sum_{i=1}^n\nu_i^*\left\{[(M\Lambda C)^{-1}]_iM\nabla g(z^*,u^*)+\nabla x_i(x^*)\right\}+\sum_{i=1}^q\lambda_{3,i}^*L_i\nabla \psi(z^*,u^*)&=0
\end{aligned}\end{equation*}
which is the KKT condition of the nominal problem in \cref{eqn:nominalOPT} where the equality constraint is replaced by its fixed point representation.
\end{proof}

Next, we show an example in a polynomial optimization problem that includes nonlinear equality constraints.

\begin{example}{(Polynomial Optimization)} In this example, a polynomial optimization problem adapted from an example in \cite{Park2017} is considered,
\begin{equation*}
	\begin{aligned}
		\underset{u,x}{\text{minimize}} \hskip 1em & u_3 \\
		\text{subject to} \hskip 1em & x_1^2+x_2^2+x_3^2-1=0 \\
		& u_1-x_1^2+w_1=0 \\
		& u_2-x_2x_3+w_2=0 \\
		& x_1u_1-2x_1u_2+x_2\leq u_3,\ \forall w\in\mathcal{W}^Q(\gamma),
	\end{aligned}
\end{equation*}
where $\mathcal{W}^Q(\gamma)=\{w\mid \lVert w\rVert_2\leq\gamma\}$. In this example, we consider the nominal problem where $\gamma=0$ so that $w=0$. The robust optimization will be considered later in \cref{example:robust_poly_opt}. \cref{fig:scrs_poly} shows the convergence of the sequential convex restriction described in \cref{alg:SCRS} with four different initial conditions. The triangular-shaped feasible region is created by the solvability condition, and the convergence of the algorithm depends on the initialization. The global optimal point is achieved with the initial condition in (a) in this example. The initial conditions in (a) and (b) arrive at a local optimal point satisfying the KKT conditions. The initial condition in (c) arrives at the boundary of the constraints where $\nabla_x f(x,u^*)\mid_{x=x^*}$ becomes singular.
\begin{figure}[!htbp]
	\centering
	\includegraphics[width=4.5in]{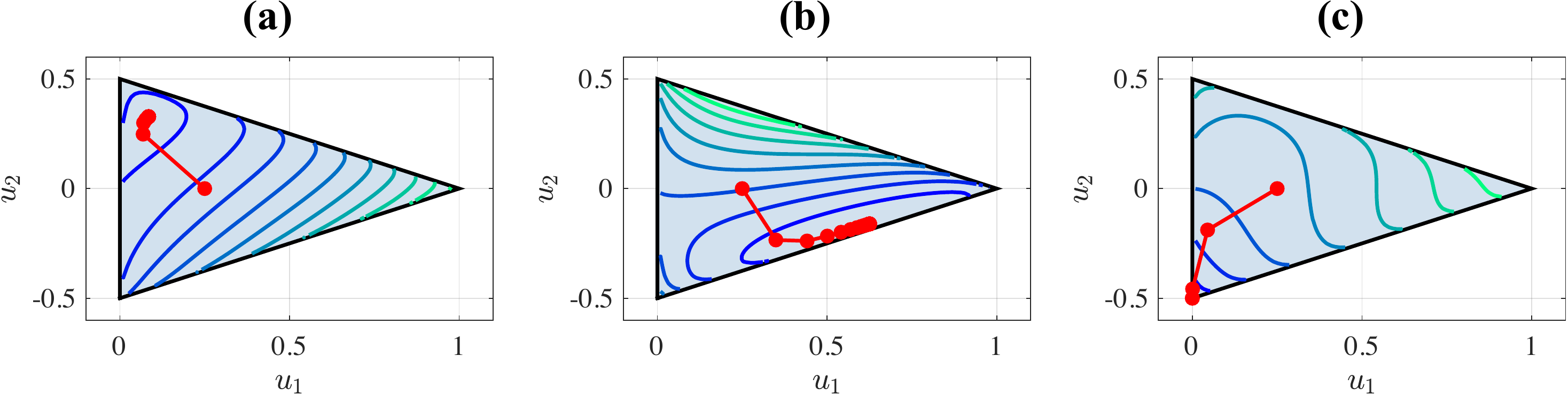}
	\caption{The blue region represents the feasible region and the contour line shows the objective function where the darker contour lines have the lower objective value. The initial condition for the explicit variable was set to $u^{(0)}=[0.25,\,0,\,2]$. The initial condition for the implicit variable was set to (a) $[0.5,\,-0.866,\,0]$, (b) $[-0.5,\,-0.866,\,0]$, and (c) $[0.5,\,0,\,0.866]$.}
	\label{fig:scrs_poly}
\end{figure}
\label{example:poly}
\end{example}

A larger example for solving the Optimal Power Flow problem using the sequential convex restriction was considered in \cite{Lee2019} as an extension of \cite{Lee2018} for a power systems application. In the next section, we will consider the optimization problem that includes bounded uncertain variables, which is the main motivation for using sequential convex restriction.

\subsection{Sequential Convex Restriction for Robust Optimization}
In this section, we extend the sequential convex restriction to solve the robust optimization problems with nonlinear equality constraints in \cref{eqn:robustOPT_orig}. 

Many classes of robust optimization problems are known to have counterparts that can be solved with a finite and explicit optimization problem, however, those results are limited to nonlinear inequality constraints \cite{Ben-Tal1999,Bertsimas2004,Ben-Tal2003,Averbakh2008,BenTal2009,Bertsimas2010,Beyer2007,Gabrel2014}.
The equality constraints were mostly assumed to be linear and studied under a special context \cite{ElGhaoui1997,Calayore2004}.
The equality constraint was considered in \cite{Zhang2007}, but it relies on the first-order approximation around its neighborhood and does not provide a rigorous guarantee.

On the other hand, sequential convex restriction described in \cref{alg:SCRS} gives a guarantee for robustness against the given uncertainty set.
Moreover, we discussed a number of results for the convex restriction of the nominal constraints, and these results imply that SCRS for robust optimization problem will yield a good approximate solution.
One thing to note is that while SCRS guarantees robustness, the optimality is not necessarily guaranteed. The following remark provides a practical way to quantify the optimality gap.

\begin{corollary}{(Optimality Gap for Robust Optimization Problem)}
Suppose that $u^*$ denotes the converged solution of \cref{alg:SCRS}.
The optimality gap can be bounded by
\begin{equation}
f_0(u^*)-f_0(u^\textrm{robust-opt})\leq f_0(u^*)-f_0(u^\textrm{nominal-opt}),
\label{eqn:robust_opt_gap}
\end{equation}
where $u^\textrm{robust-opt}$ is the globally optimal solution for the robust optimization problem in \cref{eqn:robustOPT_orig}, and $u^\textrm{nominal-opt}$ is the globally optimal solution of the nominal problem in \cref{eqn:nominalOPT}.
\end{corollary}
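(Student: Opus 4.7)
The plan is to exploit the fact that the nominal optimization problem is a relaxation of the robust one, so the nominal optimal value lower-bounds the robust optimal value, and then rearrange. No fixed-point or envelope machinery is needed here; this is purely a feasibility-set inclusion argument.

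First, I would establish that $\mathcal{U}_{\mathcal{W}} \subseteq \mathcal{U}$. The nominal uncertainty $w^{(0)}$ lies in $\mathcal{W}$ by construction (it is the nominal point of the uncertainty set used throughout Section 4). If $u \in \mathcal{U}_{\mathcal{W}}$, then by definition for every $w \in \mathcal{W}$ there exists an implicit variable $x$ satisfying $f(x,u,w)=0$ and $h(x,u,w)\leq 0$; specializing to $w = w^{(0)}$ produces an implicit variable satisfying the nominal equality and inequality constraints, so $u \in \mathcal{U}$.

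Second, since $f_0$ is minimized over a strictly larger set in the nominal problem than in the robust one, one has
\begin{equation*}
f_0(u^\textrm{nominal-opt}) \;=\; \min_{u\in\mathcal{U}} f_0(u) \;\leq\; \min_{u\in\mathcal{U}_{\mathcal{W}}} f_0(u) \;=\; f_0(u^\textrm{robust-opt}).
\end{equation*}
Multiplying by $-1$ and adding $f_0(u^*)$ to both sides yields exactly the claimed bound in \cref{eqn:robust_opt_gap}.

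There is no real obstacle in the proof itself; the interesting content is the interpretation. The right-hand side of \cref{eqn:robust_opt_gap} is practically useful because $f_0(u^\textrm{nominal-opt})$ can be lower-bounded by running a suitable global solver (or a convex relaxation) on the nominal problem \cref{eqn:nominalOPT}, which is free of the semi-infinite difficulty. Combined with the upper bound $f_0(u^*)$ produced by \cref{alg:SCRS}, one obtains a computable certificate for the sub-optimality of the SCRS output relative to the true robust optimum, without ever having to reason globally about the robust feasible set.
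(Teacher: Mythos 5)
Your argument is correct and is essentially the paper's own proof: both rest on the observation that $w^{(0)}\in\mathcal{W}$ makes the robust feasible set a subset of the nominal one, hence $f_0(u^{\textrm{nominal-opt}})\leq f_0(u^{\textrm{robust-opt}})$, followed by rearrangement. You simply spell out the set-inclusion step $\mathcal{U}_{\mathcal{W}}\subseteq\mathcal{U}$ more explicitly than the paper does.
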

\begin{proof}
Since the nominal uncertainty is a special case in the uncertainty set, $w^{(0)}\in\mathcal{W}$, it follows that $f_0(u^\textrm{nominal-opt})\leq f_0(u^\textrm{robust-opt})$. A simple rearrangement leads to the condition in \cref{eqn:robust_opt_gap}.
\end{proof}

Next, we show an example in polynomial optimization subject to additive uncertainties.

\begin{example}\textbf{(Polynomial Optimization)}
Consider the robust optimization problem in \cref{example:poly} where the uncertainty set is $\mathcal{W}=\{w\mid\lVert w\rVert_2\leq\gamma\}$ with $\gamma>0$. In this example, the uncertainty is additive to the nonlinear equation, and the condition from \cref{corollary:addtive_uncertainty} was used to guarantee robustness.
\cref{fig:Rscrs_poly} shows the illustration of the results for various sizes of the uncertainty set and initial conditions.
\begin{figure}[!htbp]
	\centering
	\includegraphics[width=4.5in]{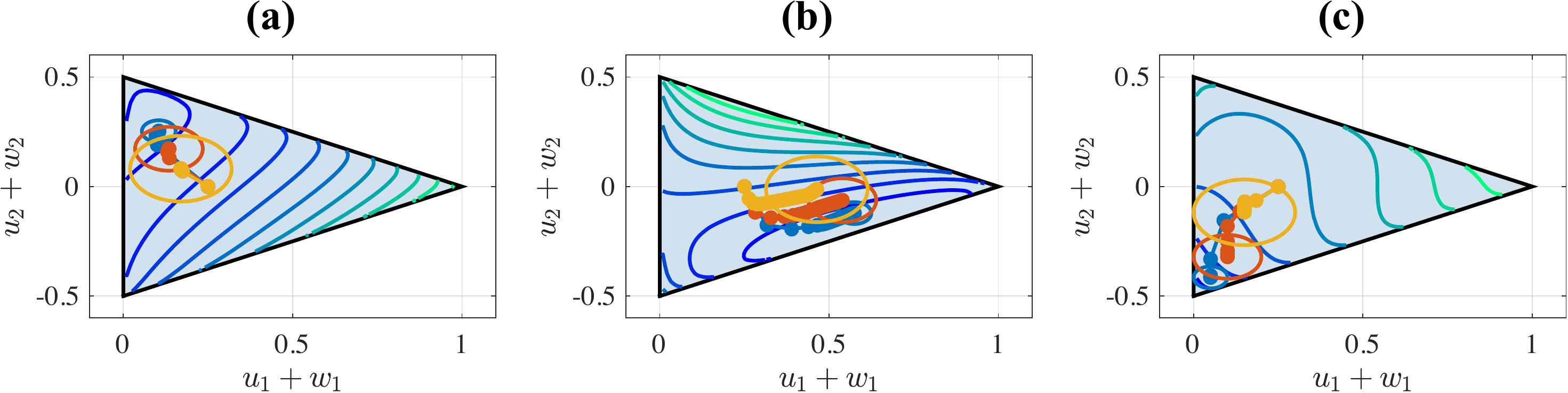}
	\caption{The convergence of sequential convex restriction with $\gamma=0.05$ (blue), $\gamma=0.1$ (red), and $\gamma=0.15$ (yellow).}
	\label{fig:Rscrs_poly}
\end{figure}
\label{example:robust_poly_opt}
\end{example}

\section{Concluding remarks}\label{sec:conclusion}
In this paper, we have developed the sequential convex restriction for solving a robust optimization problem with nonlinear equality and inequality constraints. We expand the convex restriction of nominal constraints and develop sufficient conditions for robust feasibility against the given uncertainty set. The algorithm guarantees robust feasibility of the solution at every iteration by leveraging the conditions from convex restriction. We showed that the algorithm asymptotically converges to a solution satisfying KKT condition for the nominal constraints. 

\appendix
\section{Concave Envelopes}\label{apdx:concave_env}

\subsection{Concave Envelope for Bilinear Function}\label{apdx:bilinear}
A bilinear function can be bounded by the following concave envelopes with some $\rho_1, \, \rho_2>0$ and the nominal point $(x^{(0)},\, y^{(0)})$ \cite{Lee2018},
\begin{equation}
\begin{aligned}
xy&\geq x^{(0)}y^{(0)}+y^{(0)}(x-x^{(0)})+x^{(0)}(y-y^{(0)})-\frac{1}{4}\left[\rho_1(x-x^{(0)})-\frac{1}{\rho_1}(y-y^{(0)})\right]^2 \\
xy&\leq x^{(0)}y^{(0)}+y^{(0)}(x-x^{(0)})+x^{(0)}(y-y^{(0)})+\frac{1}{4}\left[\rho_2(x-x^{(0)})+\frac{1}{\rho_2}(y-y^{(0)})\right]^2.
\end{aligned}
\end{equation}
The over-estimator is tight along $\rho_2(x-x^{(0)})-\frac{1}{\rho_2}(y-y^{(0)})=0$, and the under-estimator is tight along $\rho_2(x-x^{(0)})+\frac{1}{\rho_2}(y-y^{(0)})=0$. Both over- and under-estimators are tight at the nominal point, $(x^{(0)},y^{(0)})$.

\begin{figure}[!htbp]
	\centering
	\includegraphics[width=2.1in]{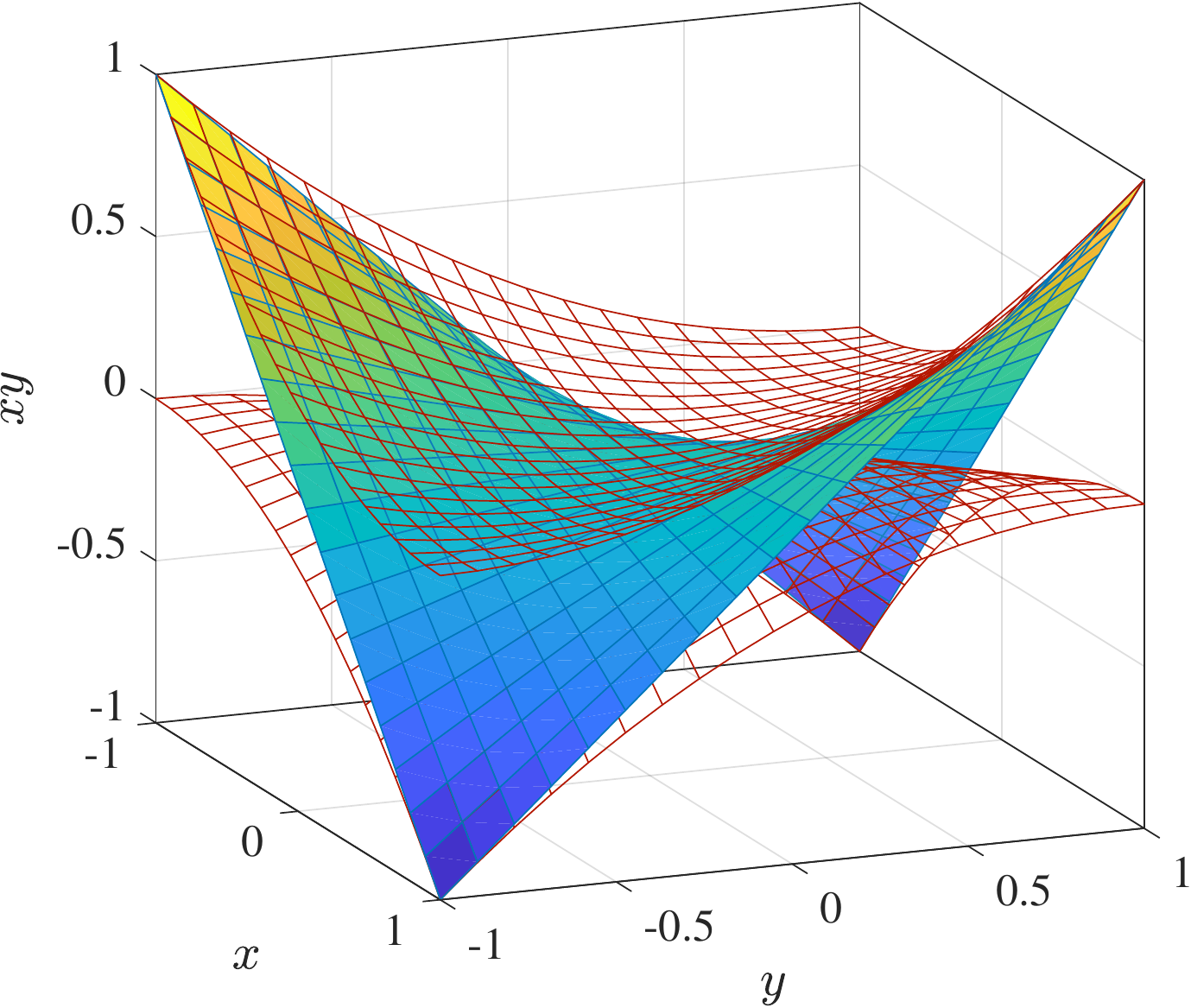}
	\caption{Illustration of concave envelopes for a bilinear function.}
	\label{fig_convenv_ex}
\end{figure}

\subsection{Concave Envelope for Unitary Quadratic Function}
A unitary quad-ratic function can be bounded by the following quadratic concave envelopes for all $x$ given the nominal point $x^{(0)}$ \cite{Lee2018},
\begin{equation}
\begin{aligned}
x^2&\geq (x^{(0)})^2+2x^{(0)}(x-x^{(0)})+(x-x^{(0)})^2=x^2 \\
x^2&\leq (x^{(0)})^2+2x^{(0)}(x-x^{(0)})=2x^{(0)}x-(x^{(0)})^2.
\end{aligned}
\end{equation}

\subsection{Concave Envelope for Trigonometric Function}
Trigonometric functions can be bounded by the following quadratic concave envelopes for all $\theta$ given the nominal point $\theta_0$,
\begin{displaymath}
\begin{aligned}
\sin\theta&\geq\sin{\theta^{(0)}}+\cos{\theta^{(0)}}(\theta-\theta^{(0)})-\frac{1}{2}(\theta-\theta^{(0)})^2 \\
\sin\theta&\leq \sin{\theta^{(0)}}+\cos{\theta^{(0)}}(\theta-\theta^{(0)})+\frac{1}{2}(\theta-\theta^{(0)})^2,
\end{aligned}
\end{displaymath}
\begin{displaymath}
\begin{aligned}
\cos\theta&\geq\cos{\theta^{(0)}}-\sin{\theta^{(0)}}(\theta-\theta^{(0)})-\frac{1}{2}(\theta-\theta^{(0)})^2 \\
\cos\theta&\leq \cos{\theta^{(0)}}-\sin{\theta^{(0)}}(\theta-\theta^{(0)})+\frac{1}{2}(\theta-\theta^{(0)})^2.
\end{aligned}
\end{displaymath}

\subsection{Concave Envelope for Logistic Function}
A logistic function, $\sigma(x)=\frac{1}{1+e^{-x}}$, has the bounded second derivative of $\frac{\sqrt{3}}{18}$, and its quadratic concave envelope is
\begin{equation*}
\begin{aligned}
\sigma(x)&\geq\sigma^{(0)}+\sigma^{(0)}(1-\sigma^{(0)})(x-x^{(0)})-\frac{\sqrt{3}}{36}(x-x^{(0)})^2 \\
\sigma(x)&\leq \sigma^{(0)}+\sigma^{(0)}(1-\sigma^{(0)})(x-x^{(0)})+\frac{\sqrt{3}}{36}(x-x^{(0)})^2.
\end{aligned}
\end{equation*}
where $x^{(0)}$ is the nominal point and $\sigma^{(0)}=\frac{1}{1+e^{-x^{(0)}}}$.

\bibliographystyle{siam}
\bibliography{references}

\begin{thebibliography}{10}

\bibitem{Averbakh2008}
{\sc I.~Averbakh and Y.-B. Zhao}, {\em {Explicit Reformulations for Robust
  Optimization Problems with General Uncertainty Sets}}, SIAM Journal on
  Optimization, 18 (2008), pp.~1436--1466.

\bibitem{Ben-Tal2015}
{\sc A.~Ben-Tal, D.~den Hertog, and J.~P. Vial}, {\em {Deriving robust
  counterparts of nonlinear uncertain inequalities}}, Mathematical Programming,
  149 (2014), pp.~265--299.

\bibitem{BenTal2009}
{\sc A.~Ben-Tal, L.~E. Ghaoui, and A.~Nemirovski}, {\em {Robust optimization}},
  Princeton University Press, 2009.

\bibitem{Ben-Tal1999}
{\sc A.~Ben-Tal and A.~Nemirovski}, {\em {Robust solutions of uncertain linear
  programs}}, Operations Research Letters, 25 (1999), pp.~1--13.

\bibitem{Ben-Tal2003}
{\sc A.~Ben-Tal and A.~Nemirovski}, {\em {On Tractable Approximations of
  Uncertain Linear Matrix Inequalities Affected by Interval Uncertainty}}, SIAM
  Journal on Optimization, 12 (2003), pp.~811--833.

\bibitem{Bertsekas1999}
{\sc D.~P. Bertsekas}, {\em {Nonlinear programming}}, Athena Scientific, 1999.

\bibitem{Bertsimas2010}
{\sc D.~Bertsimas, D.~B. Brown, and C.~Caramanis}, {\em {Theory and
  Applications of Robust Optimization}}, SIAM Review, 53 (2010), pp.~464--501.

\bibitem{Bertsimas2004}
{\sc D.~Bertsimas, D.~Pachamanova, and M.~Sim}, {\em {Robust linear
  optimization under general norms}}, Operations Research Letters, 32 (2004),
  pp.~510--516.

\bibitem{Beyer2007}
{\sc H.~G. Beyer and B.~Sendhoff}, {\em {Robust optimization - A comprehensive
  survey}}, Computer Methods in Applied Mechanics and Engineering, 196 (2007),
  pp.~3190--3218.

\bibitem{Boggs1995}
{\sc P.~T. Boggs and J.~W. Tolle}, {\em {Sequential quadratic programming}},
  Acta Numerica, 4 (1995), p.~1.

\bibitem{border1989fixed}
{\sc K.~C. Border}, {\em {Fixed point theorems with applications to economics
  and game theory}}, Cambridge University Press, 1989.

\bibitem{brouwer1911abbildung}
{\sc L.~E.~J. Brouwer}, {\em {{\"{U}}ber abbildung von mannigfaltigkeiten}},
  Mathematische Annalen, 71 (1911), pp.~97--115.

\bibitem{Byrd2008}
{\sc R.~H. Byrd, F.~E. Curtis, and J.~Nocedal}, {\em {An Inexact SQP Method for
  Equality Constrained Optimization}}, SIAM Journal on Optimization, 19 (2008),
  pp.~351--369.

\bibitem{Byrd2010}
\leavevmode\vrule height 2pt depth -1.6pt width 23pt, {\em {An inexact Newton
  method for nonconvex equality constrained optimization}}, Mathematical
  Programming, 122 (2010), pp.~273--299.

\bibitem{Calayore2004}
{\sc G.~Calafiore and L.~{El Ghaoui}}, {\em {Ellipsoidal bounds for uncertain
  linear equations and dynamical systems}}, Automatica, 40 (2004),
  pp.~773--787.

\bibitem{conn2000trust}
{\sc A.~R. Conn, N.~I.~M. Gould, and P.~L. Toint}, {\em {Trust region
  methods}}, vol.~1, Siam, 2000.

\bibitem{Curtis2014}
{\sc F.~E. Curtis, T.~C. Johnson, D.~P. Robinson, and A.~W{\"{a}}chter}, {\em
  {An Inexact Sequential Quadratic Optimization Algorithm for Nonlinear
  Optimization}}, SIAM Journal on Optimization, 24 (2014), pp.~1041--1074.

\bibitem{ElGhaoui1997}
{\sc L.~{El Ghaoui} and H.~Lebret}, {\em {Robust Solutions to Least-Squares
  Problems with Uncertain Data}}, SIAM Journal on Matrix Analysis and
  Applications, 18 (2003), pp.~1035--1064.

\bibitem{florenzano2003general}
{\sc M.~Florenzano}, {\em {General equilibrium analysis: existence and
  optimality properties of equilibria}}, Springer Science {\&} Business Media,
  2003.

\bibitem{Gabrel2014}
{\sc V.~Gabrel, C.~Murat, and A.~Thiele}, {\em {Recent advances in robust
  optimization: An overview}}, jun 2014.

\bibitem{Gould2005}
{\sc N.~Gould, D.~Orban, and P.~Toint}, {\em {Numerical methods for large-scale
  nonlinear optimization}}, may 2005.

\bibitem{Hijazi2017}
{\sc H.~Hijazi, C.~Coffrin, and P.~V. Hentenryck}, {\em {Convex quadratic
  relaxations for mixed-integer nonlinear programs in power systems}},
  Mathematical Programming Computation, 9 (2017), pp.~321--367.

\bibitem{Kellner2013}
{\sc K.~Kellner, T.~Theobald, and C.~Trabandt}, {\em {Containment Problems for
  Polytopes and Spectrahedra}}, SIAM Journal on Optimization, 23 (2013),
  pp.~1000--1020.

\bibitem{Lee2018}
{\sc D.~Lee, H.~D. Nguyen, K.~Dvijotham, and K.~Turitsyn}, {\em {Convex
  Restriction of Power Flow Feasibility Sets}}, IEEE Transactions on Control of
  Network Systems,  (2019).

\bibitem{Lee2019}
{\sc D.~Lee, K.~Turitsyn, D.~K. Molzahn, and L.~A. Roald}, {\em Feasible path
  identification in optimal power flow with sequential convex restriction},
  arXiv preprint arXiv:1906.09483,  (2019).

\bibitem{Mitsos}
{\sc A.~Mitsos, B.~Chachuat, and P.~I. Barton}, {\em {McCormick-Based
  Relaxations of Algorithms}}, SIAM Journal on Optimization, 20 (2009),
  pp.~573--601.

\bibitem{nocedal2006numerical}
{\sc J.~Nocedal and S.~Wright}, {\em {Numerical optimization}}, Springer
  Science {\&} Business Media, 2006.

\bibitem{Park2017}
{\sc J.~Park and S.~Boyd}, {\em General heuristics for nonconvex quadratically
  constrained quadratic programming}, arXiv preprint arXiv:1703.07870,  (2017).

\bibitem{Zhang2007}
{\sc Y.~Zhang}, {\em {General robust-optimization formulation for nonlinear
  programming}}, Journal of Optimization Theory and Applications, 132 (2007),
  pp.~111--124.

\end{thebibliography}
\end{document}